 \newtheorem{theorem}{Theorem}[section]
 \newtheorem{lemma}[theorem]{Lemma}
 \theoremstyle{remark}
 \def\NN{\mathds{N}}
 \def\RR{\mathbb{R}}
 \def\QQ{\mathbb{Q}}
 \def\CC{\mathbb{C}}
 \def\ZZ{\mathbb{Z}}
 \def\kk{\mathds{k}}
\begin{document}
 	
 	\def\NN{\mathbb{N}}
 	\def\RR{\mathds{R}}
 	\def\HH{I\!\! H}
 	\def\QQ{\mathbb{Q}}
 	\def\CC{\mathds{C}}
 	\def\ZZ{\mathbb{Z}}
 	\def\DD{\mathds{D}}
 	\def\OO{\mathcal{O}}
 	\def\kk{\mathds{k}}
 	\def\KK{\mathbb{K}}
 	\def\ho{\mathcal{H}_0^{\frac{h(d)}{2}}}
 	\def\LL{\mathbb{L}}
 	\def\L{\mathds{k}_2^{(2)}}
 	\def\M{\mathds{k}_2^{(1)}}
 	\def\k{\mathds{k}^{(*)}}
 	\def\l{\mathds{L}}

 	\selectlanguage{english}

 \title[The unit group....
 ]{The unit group and the 2-class number of some fields of the form $\mathbb{Q}(\sqrt{2},  \sqrt{pq}, \sqrt{ps})$ and $\mathbb{Q}(\sqrt{2},  \sqrt{pq}, \sqrt{ps}, \sqrt{-\ell})$
 	}

 		\author[M. B. T. El Hamam]{Moha Ben Taleb El Hamam}
 	\address{Moha Ben Taleb El Hamam: 	Sidi Mohamed Ben Abdellah University, Faculty of Sciences Dhar El Mahraz, Fez, Morocco.}
 	\email{mohaelhomam@gmail.com }

 	\subjclass[2010]{ 11R27, 11R04, 11R29.}
 	\keywords{Unit group, multiquadratic number fields, Unit index }

 	\begin{abstract}
		Let   $\LL^+=\mathbb{Q}(\sqrt{2},  \sqrt{pq}, \sqrt{ps})$ and $\LL=\mathbb{Q}(\sqrt{2},  \sqrt{pq}, \sqrt{ps}, \sqrt{-\ell})$ be two  fields, where  $q$,  $p$ and $s$ three different prime integers and  $\ell\geq1$ be a positive odd  square-free integer relatively prime to $q$, $p$ and $s$.
	The purpose of this paper is to   show  how one can proceed to perform
	the  calculation  of unit group of the fields of the form $\LL^+=\mathbb{Q}(\sqrt{2},  \sqrt{pq}, \sqrt{ps})$ and $\LL=\mathbb{Q}(\sqrt{2},  \sqrt{pq}, \sqrt{ps}, \sqrt{-\ell})$. More
	precisely, we compute the unit group and the $2$-class number of these fields whenever 	$p\equiv-s\equiv 5\pmod 8, q\equiv7\pmod 8 ~~ \text{and} ~~ \left(\frac{	p}{	q}\right)=\left(\frac{	p}{	s}\right)=\left(\frac{s}{	q}\right)=1$ and
	$\left(\frac{	p}{	q}\right)=\left(\frac{	p}{	s}\right),$ or $p\equiv-s\equiv 5\pmod 8, q\equiv7\pmod 8 ~~ \text{and} ~~ \left(\frac{	p}{	q}\right)=\left(\frac{	p}{	s}\right)=\left(\frac{s}{	q}\right)=-1$.
 	\end{abstract}
 	
 	\selectlanguage{english}
 	
 	\maketitle

  \section{ Introduction}
  
  One major  problem in algebraic number theory (and thus in the theory of units of number fields which is related to all areas of algebraic number theory) is the computation of a
  fundamental system of units.
  Let  $k$ be a  number field of degree $n$ and $k^+$ its maximal real subfield. Let  $E_k$  (resp. $E_{k^+}$) denote  the unit group of $k$ (resp. $k^+$) that is  the group of the invertible elements of the ring $\mathcal{O}_k$  (resp. $\mathcal{O}_{k^+}$) of algebraic integers of the number field  $k$ (resp. $k^+$). The computation of the unit group of $k$ may
  give answers on many questions of algebraic number theory such as the size of the
  $2$-class group, the length of the Hilbert $2$-class field tower of its subextensions and
  the capitulation problem.
  For  quadratic fields, the problem
  is easily solved.    For quartic bicyclic fields, Kubota \cite{Ku-56} gave
  a method for finding a fundamental system of units.
  Wada \cite{Wa-66} generalized Kubota's method, creating an algorithm for computing
  fundamental units in any given multiquadratic field. However, in general, it is not easy to  compute the unit group of a number field  especially for number fields of degree more than $4$. In some of their recent works, Azizi, Chems-Eddin,  El Hamam and Zekhnini used some technical computations to determine the unit group of some  number fields $k$ of degree $8$ and $16$ (see \cite{ChemsUnits9,chemszekhniniazizilambdas,chemszekhniniaziziUnits1,CEH,El Hamam,El Hamam1}). 
  We note that all the fields considered in the previous works are all of the form
  $ \mathbb{Q}(\sqrt{2},  \sqrt{p}, \sqrt{q} )$ or $ \mathbb{Q}(\sqrt{2},  \sqrt{p}, \sqrt{q}, \sqrt{-\ell})$, where $p$ and $q$ are two different prime integers and $\ell$  a positive odd  square-free integer.
  
  Let  $q$,  $p$ and $s$ be  three different primes integers and   $\ell\geq1$ be a positive odd  square-free integer relatively prime to $q$, $p$ and $s$. In the present work,  we shall    determine  the unit group  and the $2$-class number of some fields of degree $8$ and $16$ of the form  $\LL^+= \mathbb{Q}(\sqrt{2},  \sqrt{pq}, \sqrt{ps})$ and
  $ \LL=\mathbb{Q}(\sqrt{2},  \sqrt{pq}, \sqrt{ps}, \sqrt{-\ell})$, where $q$,  $p$ and $s$ satisfy one of the following conditions:
   
  \begin{equation}\label{case1}
  	p\equiv-s\equiv 5\pmod 8, q\equiv7\pmod 8 ~~ \text{and} ~~ \left(\frac{	p}{	q}\right)=\left(\frac{	p}{	s}\right)=\left(\frac{s}{	q}\right)=1,
  \end{equation}
  \begin{equation}\label{case2}
  	 p\equiv-s\equiv 5\pmod 8, q\equiv7\pmod 8 ~~ \text{and} ~~ \left(\frac{	p}{	q}\right)=\left(\frac{	p}{	s}\right)=\left(\frac{s}{	q}\right)=-1.
  \end{equation}

  We note that the computation of the unit group  of these fields may be very important to deal with the problem of the $2$-class field tower of biquadratic number fields (see for example \cite{{acz}}). Furthermore,  these fields are useful in the study of  cyclotomic $\ZZ_2$-extension of the fields $ \mathbb{Q}( \sqrt{pq}, \sqrt{ps})$ and
  $ \mathbb{Q}( \sqrt{pq}, \sqrt{ps}, \sqrt{-\ell})$ and this maybe be useful (see \cite{ChemsUnits9}).

  In the rest of this article we use the following notations: let $\varepsilon_m$ denote the fundamental unit of the quadratic field $\QQ(\sqrt{m})$, $\left(\frac{\cdot}{	\cdot}\right)$ the 
  Legendre Symbol and $h(k)$ (resp. $h_2(k)$) the class  number (resp. $2$-class number) of a number field $k$. 	
  Finally, let
  $h_2(d)$ denote the 2-class number of the quadratic field $\QQ(\sqrt{d})$
  and  $\zeta_n$ denote a primitive $n$-th root of unity.

  \section{ Preliminary results}
  In this section we recall some results that will be useful in what follows.

  \begin{lemma}\label{Lemme azizi} Let $K_0$ be an abelian field,  $K=K_0(i)$ a quadratic extension of $K_0$,  $n\geq 2$  an integer and $\xi_n$ a primitive  $2^n$-th root of unity,  then
  	$\xi_n=\frac{1}{2}(\mu_n+\lambda_ni)$,  where $\mu_n=\sqrt{2+\mu_{n-1}}$,  $\lambda_n=\sqrt{2-\mu_{n-1}}$,  $\mu_2=0$,  $\lambda_2=2$ and $\mu_3=\lambda_3=\sqrt{2}$. Let $n_0$ be the greatest
  	integer such that $\xi_{n_0}$ is contained in $K$,  $\{\epsilon_1,  \cdots,  \epsilon_r\}$ a fundamental system of units of $K_0$ and $\epsilon$ a unit of $K_0$ such that
  	$(2+\mu_{n_0})\epsilon$ is a square in $K_0\ ($if it exists$)$. Then a fundamental system of units of $K$ is one of the following systems:
  	\begin{enumerate}[\rm 1.]
  		\item $\{\epsilon_1,  \cdots,  \epsilon_{r-1},  \sqrt{\xi_{n_0}\epsilon } \}$ if $\epsilon$ exists,   in this case $\epsilon=\epsilon_1^{j_1}\cdots \epsilon_{r-1}^{j_{r-1}}\epsilon_r$,
  		where $j_i\in \{0,  1\}$.
  		\item $\{\epsilon_1,  \cdots,  \epsilon_r \}$ otherwise.
  		
  	\end{enumerate}
  \end{lemma}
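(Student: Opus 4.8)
The plan is to exploit the structure of $K=K_0(i)$ as a quadratic, CM-type extension of $K_0$ and to compare $E_K$ with the subgroup $W_KE_{K_0}$, where $W_K=\langle\xi_{n_0}\rangle$ denotes the group of (here $2$-power) roots of unity of $K$. Since $K_0$ is real and $K=K_0(i)$ is totally complex, $E_{K_0}$ and $E_K$ have the same unit rank $r$, so it suffices to produce a $\ZZ$-basis of $E_K/W_K$. First I would bound the unit index. Let $\eta\mapsto\overline{\eta}$ be the nontrivial automorphism of $K/K_0$ and consider $\varphi\colon E_K\to W_K$, $\varphi(\eta)=\eta/\overline{\eta}$; this is well defined because $\eta/\overline{\eta}$ has absolute value $1$ under every embedding, hence is a root of unity. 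Its kernel is $E_{K_0}$, and $\varphi(\zeta)=\zeta^{2}$ for $\zeta\in W_K$ while $\varphi$ kills $E_{K_0}$, so $\varphi(W_KE_{K_0})=W_K^{2}$. As $[W_K:W_K^{2}]=2$, this forces $Q:=[E_K:W_KE_{K_0}]\in\{1,2\}$, with $Q=2$ exactly when some $\eta\in E_K$ satisfies $\eta/\overline{\eta}=\xi_{n_0}$.

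Next I would convert that criterion into the square condition in the statement. If $\eta/\overline{\eta}=\xi_{n_0}$ then $\eta^{2}=\xi_{n_0}(\eta\overline{\eta})$ with $\epsilon:=\eta\overline{\eta}\in E_{K_0}$, that is $\eta=\sqrt{\xi_{n_0}\epsilon}$; conversely any such $\eta$ has $\eta/\overline{\eta}=\pm\xi_{n_0}$, a generator of $W_K$. Thus $Q=2$ iff $\xi_{n_0}\epsilon$ is a square in $K$ for some $\epsilon\in E_{K_0}$. Using $\mu_{n_0}=\xi_{n_0}+\xi_{n_0}^{-1}$ one checks the identity $(1+\xi_{n_0})^{2}=\xi_{n_0}(2+\mu_{n_0})$, so $\xi_{n_0}\epsilon$ is a square in $K$ iff $(2+\mu_{n_0})\epsilon$ is; and since $(2+\mu_{n_0})\epsilon$ lies in $K_0$, writing a putative square root as $x+yi$ with $x,y\in K_0$ shows that a real element is a square in $K_0(i)$ iff it or its negative is a square in $K_0$, the sign being absorbed by replacing $\epsilon$ with the unit $-\epsilon$. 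This yields precisely the hypothesis that $(2+\mu_{n_0})\epsilon$ be a square in $K_0$.

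Finally I would assemble the fundamental system. When $Q=1$ we have $E_K=W_KE_{K_0}$, and since the $\epsilon_i$ are multiplicatively independent modulo $W_K$ (a relation $\prod_i\epsilon_i^{a_i}\in W_K$ is real, hence equals $\pm1$, forcing all $a_i=0$), the set $\{\epsilon_1,\dots,\epsilon_r\}$ is a fundamental system, giving case 2. When $Q=2$, set $\eta=\sqrt{\xi_{n_0}\epsilon}$ and write $\epsilon=\pm\prod_i\epsilon_i^{a_i}$. If every $a_i$ were even then $\eta$ would equal a real unit times $\sqrt{\pm\xi_{n_0}}$, a primitive $2^{n_0+1}$-th root of unity not lying in $K$, contradicting the maximality of $n_0$; hence some $a_i$ is odd. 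After absorbing the even powers of the $\epsilon_i$ and the sign into $\eta$ and into $W_K$, and relabelling so that the odd exponent sits on $\epsilon_r$, we obtain $\epsilon=\epsilon_1^{j_1}\cdots\epsilon_{r-1}^{j_{r-1}}\epsilon_r$ with $j_i\in\{0,1\}$ and the relation $2[\eta]=\sum_{i<r}j_i[\epsilon_i]+[\epsilon_r]$ in $E_K/W_K$.

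The last paragraph is where I expect the main obstacle. Showing that some exponent $a_i$ is odd is the crux, as it is exactly what guarantees that replacing $\epsilon_r$ by $\sqrt{\xi_{n_0}\epsilon}$ is unimodular rather than merely finite-index: from the displayed relation, $\langle[\epsilon_1],\dots,[\epsilon_{r-1}],[\eta]\rangle$ contains the lattice $\langle[\epsilon_1],\dots,[\epsilon_r]\rangle$ with index $2$, and since the latter has index $Q=2$ in $E_K/W_K$, the former must equal $E_K/W_K$. This confirms that $\{\epsilon_1,\dots,\epsilon_{r-1},\sqrt{\xi_{n_0}\epsilon}\}$ is a genuine fundamental system, establishing case 1 and completing the dichotomy.
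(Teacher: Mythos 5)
Your argument is correct, but there is nothing in the paper to compare it with: the paper's entire ``proof'' of this lemma is the citation ``See \cite[Proposition 2]{azizunints99}'', so what you have produced is a self-contained reconstruction of the classical argument (essentially Hasse's unit-index theory, which is also what underlies Azizi's cited proposition). Your route --- the map $\varphi(\eta)=\eta/\overline{\eta}$ into the roots of unity, the resulting bound $Q=[E_K:W_KE_{K_0}]\in\{1,2\}$, the translation of $Q=2$ into solvability of $\eta^2=\xi_{n_0}\epsilon$ via the identity $(1+\xi_{n_0})^2=\xi_{n_0}(2+\mu_{n_0})$ together with the observation that a real element is a square in $K_0(i)$ iff it or its negative is a square in $K_0$, and the final index-two lattice computation showing the basis change is unimodular --- is sound at every step, including the key point that if all exponents $a_i$ were even then $K$ would contain a primitive $2^{n_0+1}$-st root of unity, contradicting the maximality of $n_0$. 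Two points you should make explicit to be fully rigorous. First, the lemma tacitly requires $K_0$ to be totally real (otherwise $E_{K_0}$ and $E_K$ do not have equal rank and the statement is false as written); you invoke this when you use the CM structure, and it does hold in all of the paper's applications, where $K_0=\LL^+$. Second, your $W_K=\langle\xi_{n_0}\rangle$ is only the $2$-primary part of the torsion of $E_K$: in general $K=K_0(i)$ can contain odd-order roots of unity (e.g.\ $\QQ(\sqrt{3},i)\ni\zeta_{12}$), in which case $\varphi$ takes values in the full torsion group $\mu_K$, not in $\langle\xi_{n_0}\rangle$, and the containment $\varphi(E_K)\subseteq W_K$ as you state it is unjustified. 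This costs nothing --- odd-order roots of unity are squares in $\mu_K$, so $[\mu_K:\mu_K^2]=2$ and every step of your argument survives with $\mu_K$ in place of $W_K$ --- but the repair should be recorded, since the dichotomy $Q\in\{1,2\}$ is the pivot of the whole proof.
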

  \begin{proof}
  	See \cite[Proposition 2]{azizunints99}.
  \end{proof}
\begin{lemma}\label{Lemme azizi2}  Let  $K_0/\QQ$ be an abelian extension such that $K_0$ is real and $\beta$  a positive square-free algebraic integer of   $K_0$.
	Assume that  $K=K_0(\sqrt{-\beta})$  is a quadratic extension of     $K_0$, which is abelian over $\QQ$. Assume furthermore that $i=\sqrt{-1}\not\in K$.
	Let $\{\epsilon_1, \cdots, \epsilon_r\}$ be a fundamental system of units of      $K_0$. Without loss of generality we may suppose that the units   $\epsilon_i$ are positive.
	Let $\epsilon$ be a unit of $K_0$ such that
	$\beta\epsilon$ is a square in $K_0\ ($if it exists$)$. Then a fundamental system of units of $K$ is one of the following systems :
	\begin{enumerate}[\rm 1.]
		\item $\{\epsilon_1,\cdots,\epsilon_{r-1},\sqrt{-\epsilon } \}$ if $\epsilon$ exists, in this case $\epsilon=\epsilon_1^{j_1}\cdots \epsilon_{r-1}^{j_{r-1}}\epsilon_r$,
		where $j_i\in \{0,1\}$.
		\item $\{\epsilon_1,\cdots,\epsilon_r \}$ otherwise.
		
	\end{enumerate}
\end{lemma}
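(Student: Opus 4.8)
The statement is the exact analogue, for the extension $K=K_{0}(\sqrt{-\beta})$, of Lemma \ref{Lemme azizi}, and I would prove it by the same circle of ideas. Under the hypotheses $K$ is a CM field whose maximal totally real subfield is $K_{0}$; write $\sigma$ for the complex conjugation, so that $\langle\sigma\rangle=\mathrm{Gal}(K/K_{0})$ and $\sigma(\sqrt{-\beta})=-\sqrt{-\beta}$. Because $i\notin K$, the only roots of unity in $K$ are $\pm1$, and since $K$ and $K_{0}$ have the same unit rank $r$ (Dirichlet's theorem, with $r_{1}(K)=0$ and $r_{2}(K)=[K_{0}:\QQ]=r+1$), the group $E_{K_{0}}$ is of finite index in $E_{K}$. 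The plan is to show this index is $1$ or $2$, to characterise the index-$2$ case by the existence of $\epsilon$, and then to exhibit the fundamental system.

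First I would introduce the homomorphism $\phi\colon E_{K}\to\{\pm1\}$ given by $\phi(u)=u/\sigma(u)$. This is well defined: for every archimedean place one has $|u/\sigma(u)|=1$, so $u/\sigma(u)$ is an algebraic integer all of whose conjugates have absolute value $1$, hence a root of unity by Kronecker's theorem, i.e. $\pm1$ as $i\notin K$. Its kernel is $E_{K}\cap K_{0}=E_{K_{0}}$, whence $[E_{K}:E_{K_{0}}]\in\{1,2\}$. If $\phi$ is trivial then $E_{K}=E_{K_{0}}$ and the fundamental system is $\{\epsilon_{1},\dots,\epsilon_{r}\}$, which is alternative $2$.

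Next I would treat the index-$2$ case, i.e. when there is a unit $u$ with $\sigma(u)=-u$. Then $u\sqrt{-\beta}$ is fixed by $\sigma$, so $\gamma:=u\sqrt{-\beta}\in K_{0}$, and setting $\epsilon:=-u^{2}\in E_{K_{0}}$ gives $\beta\epsilon=\gamma^{2}$, a square in $K_{0}$; conversely, if $\beta\epsilon=\gamma^{2}$ for some $\epsilon\in E_{K_{0}}$, then $\sqrt{-\epsilon}=\gamma/\sqrt{-\beta}\in K\setminus K_{0}$ is a unit with $\sigma(\sqrt{-\epsilon})=-\sqrt{-\epsilon}$, so the index is $2$. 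This proves that index $2\iff\epsilon$ exists, and that the extra generator may be taken to be $u=\sqrt{-\epsilon}$. Replacing $u$ by $\epsilon_{i}u$ multiplies $\epsilon$ by the square $\epsilon_{i}^{2}$, so $\epsilon$ is only well defined modulo squares of $E_{K_{0}}$; reducing all exponents modulo $2$, and using that $\beta>0$ and $\gamma^{2}>0$ force $\epsilon$ to be positive (so the sign is $+$), I may normalise $\epsilon=\epsilon_{1}^{j_{1}}\cdots\epsilon_{r}^{j_{r}}$ with $j_{i}\in\{0,1\}$.

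Finally I would run the lattice argument that yields the fundamental system. In $E_{K}/\{\pm1\}$, a free $\ZZ$-module of rank $r$ containing $E_{K_{0}}/\{\pm1\}=\langle\epsilon_{1},\dots,\epsilon_{r}\rangle$ with index $2$, the class of $\sqrt{-\epsilon}$ satisfies $2\,[\sqrt{-\epsilon}]=\sum_{i}j_{i}[\epsilon_{i}]$. Not all $j_{i}$ can be even, for otherwise $\epsilon$ would be a square and $\sqrt{-\epsilon}$ would force $i\in K$; after reindexing so that $j_{r}=1$, the relation $[\epsilon_{r}]=2[\sqrt{-\epsilon}]-\sum_{i<r}j_{i}[\epsilon_{i}]$ shows that $\{\epsilon_{1},\dots,\epsilon_{r-1},\sqrt{-\epsilon}\}$ generates the same rank-$r$ lattice as $E_{K}/\{\pm1\}$, hence is a fundamental system, which is alternative $1$ with $\epsilon=\epsilon_{1}^{j_{1}}\cdots\epsilon_{r-1}^{j_{r-1}}\epsilon_{r}$. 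The hard part is the bookkeeping of this last normalisation step: making precise that $\epsilon$ is determined only modulo squares, that positivity pins down its sign, and that reducing the exponents modulo $2$ actually produces a genuine $\ZZ$-basis rather than merely a finite-index subset — exactly the point where the index equals $2$ (and not $4$) is used.
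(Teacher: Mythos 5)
Your overall strategy --- complex conjugation, the map $u\mapsto u/\sigma(u)$, a Hasse-type index argument, and a final lattice step --- is the right one; the paper itself gives no proof at all (it simply cites Azizi, Proposition 3), and the known proof runs along exactly these lines. However, there is a genuine error at your first step, and it is load-bearing. The claim ``because $i\notin K$, the only roots of unity in $K$ are $\pm1$'' is false: the hypothesis $i\notin K$ only excludes elements of order $4$, so $W_K$ may be $\{\pm1\}$ times a nontrivial group of odd order. Concretely, take $K_0=\QQ(\sqrt{7})$ and $\beta=3$: since $\left(\frac{7}{3}\right)=1$, the ideal $(3)$ splits in $\ZZ[\sqrt{7}]$, so $\beta$ is a positive square-free algebraic integer of $K_0$; the field $K=\QQ(\sqrt{7},\sqrt{-3})$ is abelian over $\QQ$, quadratic over $K_0$, and $i\notin K$ --- every hypothesis of the lemma holds --- yet $\zeta_6\in K$. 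In this example your map $\phi$ does not land in $\{\pm1\}$ (e.g. $\phi(\zeta_6)=\zeta_3$), and $[E_K:E_{K_0}]$ is divisible by $3$, so the dichotomy ``index $1$ or $2$'' on which everything rests collapses: here no $\epsilon$ exists and alternative 2 is the correct conclusion, yet $E_K\neq E_{K_0}$, a configuration your case analysis never reaches. The gap is not hypothetical for this paper: the lemma is applied with $\beta=\ell$ an arbitrary odd square-free integer prime to $pqs$, and $\ell=3$ produces exactly $\zeta_3\in\LL$.

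The repair is standard and preserves your architecture: one must work modulo the full torsion group $W_K$ rather than modulo $\{\pm1\}$, which is legitimate because a fundamental system of units is by definition a basis of $E_K$ modulo torsion. Keep $\phi(u)=u/\sigma(u)$, viewed as a homomorphism $E_K\to W_K$ (Kronecker, as you argued). Then $\phi(W_K E_{K_0})=W_K^2$ and $\phi^{-1}(W_K^2)=W_K E_{K_0}$ (if $u/\sigma(u)=\zeta^2$ then $\zeta^{-1}u$ is fixed by $\sigma$), so $[E_K:W_K E_{K_0}]$ divides $[W_K:W_K^2]=2$. The hypothesis $i\notin K$ enters precisely here: it guarantees $-1\notin W_K^2$, so when the index equals $2$ a representative $u$ of the nontrivial coset has $\phi(u)\in -W_K^2$ and can be multiplied by a suitable root of unity to achieve $\sigma(u)=-u$. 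From that point your argument goes through essentially verbatim --- $\epsilon:=-u^2=N_{K/K_0}(u)$, the converse construction $\sqrt{-\epsilon}=\gamma/\sqrt{-\beta}$, the positivity normalization, the parity argument (all $j_i$ even would force $i\in K$), and the lattice computation --- provided every occurrence of $E_K/\{\pm1\}$ and of $[E_K:E_{K_0}]$ is replaced by $E_K/W_K$ and $[E_K:W_K E_{K_0}]$; note that $E_{K_0}\cap W_K=\{\pm1\}$, so $E_{K_0}W_K/W_K\cong E_{K_0}/\{\pm1\}$ is still free of rank $r$ on the classes of $\epsilon_1,\dots,\epsilon_r$, and the index-$2$ lattice argument closes as you wrote it.
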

\begin{proof}
	See \cite[Proposition 3]{azizunints99}.
\end{proof}

Let us first recall the result stated by Wada \cite{Wa-66} to determine a fundamental system of units of a multiquadratic number field.   Let $K_0$ be a multiquadratic number field. Denote by $\sigma$ and $\tau$ two different nontrivial elements of the group $\mathrm{Gal}(K_0/\QQ)$, let then $K_1$, $K_2$ and $K_3$ be respectively the invariant subfields of $K_0$ by $\sigma$,  $\tau$ and $\sigma\tau$, and $E_{K_i}$ the unit group of $K_i$. Then the unit group $E_{K_0}$ of $K_0$ is generated by the elements of each $E_{K_i}$ and the square roots of elements of the product $E_{K_1}E_{K_2}E_{K_3}$ that are perfect squares in $K_0$.  \label{remainder on units}

\bigskip

   Now, we can start our computations of the unit groups of $\LL^+$ and $\LL$. Consider the
   following diagram of subfields of $\LL^+/\QQ(\sqrt 2)$
   \begin{figure}[H]
   	$$\xymatrix@R=0.8cm@C=0.3cm{
   		&\LL^+=\QQ( \sqrt 2,  \sqrt{pq},  \sqrt{ps})\ar@{<-}[d] \ar@{<-}[dr] \ar@{<-}[ld] \\
   		L_1=\QQ(\sqrt 2, \sqrt{pq})\ar@{<-}[dr]& L_2=\QQ(\sqrt 2,  \sqrt{ps}) \ar@{<-}[d]& L_3=\QQ(\sqrt 2,  \sqrt{qs})\ar@{<-}[ld]\\
   		&\QQ(\sqrt 2)}$$
   	\caption{Subfields of $\LL^+/\QQ(\sqrt 2)$}\label{fig:1}
   \end{figure}

   Let $\sigma_1,$   $\sigma_2$, and  $\sigma_3$ be the elements of $ \mathrm{Gal}(\LL^+/\QQ)$ defined by	
   $$	\begin{array}{ccc}
   	\sigma_1(\sqrt{2})=-\sqrt{2},&  \sigma_1(\sqrt{pq})=\sqrt{pq}, &\sigma_1(\sqrt{ps})=\sqrt{ps},\\
   	\sigma_2(\sqrt{2})=\sqrt{2},&  \sigma_2(\sqrt{pq})=-\sqrt{pq}, &\sigma_2(\sqrt{ps})=\sqrt{ps},\\
   	\sigma_3(\sqrt{2})=\sqrt{2},&  \sigma_3(\sqrt{pq})=\sqrt{pq}, &\sigma_3(\sqrt{ps})=-\sqrt{ps}.
   \end{array}$$
   Note that $ \mathrm{Gal}(\LL^+/\QQ)=\langle \sigma_1,   \sigma_2,   \sigma_3\rangle$ and the subfields $L_1$, $L_2$ and $L_3$ are fixed by
   $\sigma_1$, $\sigma_2$ and $\sigma_2\sigma_3$ respectively.

 \section{\textbf{The case: $p$, $q$ and $s$ satisfy conditions (\ref{case1})}}
 
 Let us now recall the   following class number formula for a multiquadratic number field. 
 
 \begin{lemma}[\cite{Ku-50}]\label{wada's f.}
 	Let $K$ be a multiquadratic number field of degree $2^n$, $n\in\mathbb{N}$,  and $k_i$ the $s=2^n-1$ quadratic subfields of $K$. Then
 	$$h(K)=\frac{1}{2^v}q(K)\prod_{i=1}^{s}h(k_i),$$
 	where  $ q(K)=[E_K: \prod_{i=1}^{s}E_{k_i}]$ and   $$     v=\left\{ \begin{array}{cl}
 		n(2^{n-1}-1); &\text{ if } K \text{ is real, }\\
 		(n-1)(2^{n-2}-1)+2^{n-1}-1 & \text{ if } K \text{ is imaginary.}
 	\end{array}\right.$$
 \end{lemma}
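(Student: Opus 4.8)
The statement is Kubota's analytic class number formula, and the natural route is through the Dirichlet class number formula combined with the factorization of the Dedekind zeta function. Write $G=\mathrm{Gal}(K/\QQ)\cong(\ZZ/2\ZZ)^n$. Since $K/\QQ$ is abelian with elementary $2$-group Galois group, every character of $G$ is quadratic, and the $s=2^n-1$ nontrivial characters $\chi_i$ correspond bijectively to the quadratic subfields $k_i$, with $L(t,\chi_i)=\zeta_{k_i}(t)/\zeta_{\QQ}(t)$. Hence $\zeta_K(t)=\zeta_{\QQ}(t)\prod_{i=1}^{s}L(t,\chi_i)$, and since $\mathrm{Res}_{t=1}\zeta_{\QQ}=1$, taking residues at $t=1$ gives the clean relation $\mathrm{Res}_{t=1}\zeta_K=\prod_{i=1}^{s}\mathrm{Res}_{t=1}\zeta_{k_i}$.

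Next I would substitute the analytic class number formula into both sides. The conductor–discriminant formula $|d_K|=\prod_{i=1}^{s}|d_{k_i}|$ makes the $\sqrt{|d|}$ factors cancel, so after solving for $h(K)$ one is left with a product $\prod_i h(k_i)$, a ratio of regulators $\bigl(\prod_i R_{k_i}\bigr)/R_K$, and explicit contributions from the archimedean factors $2^{r_1}(2\pi)^{r_2}$ and the numbers of roots of unity $w_{k_i}$ and $w_K$. All of these except the regulator ratio are completely explicit powers of $2$ and $\pi$, determined by whether $K$ (and each $k_i$) is real or imaginary.

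The crux is the regulator/lattice comparison. Realize the fundamental units of the real quadratic subfields inside the unit lattice of $K$: indexing the embeddings of $K$ by $g\in G$, one has $\log|g(\varepsilon_{d_i})|=\chi_i(g)\log\varepsilon_{d_i}$, so the regulator matrix attached to $\prod_i E_{k_i}$ factors as $D\cdot X$, where $D=\mathrm{diag}(\log\varepsilon_{d_i})$ and $X=(\chi_i(g))$ is the matrix of nontrivial characters. Deleting one archimedean place, the resulting minor is a cofactor of the normalized $2^n\times 2^n$ Hadamard matrix $H$ satisfying $HH^{\mathrm t}=2^nI$; the identity $H^{-1}=2^{-n}H^{\mathrm t}$ yields
\[
|\det X'|=\frac{|\det H|}{2^n}=\frac{2^{\,n2^{n-1}}}{2^n}=2^{\,n(2^{n-1}-1)} .
\]
Therefore $\prod_i\log\varepsilon_{d_i}=2^{-n(2^{n-1}-1)}\,\mathrm{covol}\bigl(\log\prod_iE_{k_i}\bigr)$, and since $q(K)=[E_K:\prod_iE_{k_i}]$ is precisely the index of this sublattice, $\mathrm{covol}=q(K)\,R_K$. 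Feeding this back collapses the regulator ratio into $q(K)/2^{n(2^{n-1}-1)}$, and combining with the root-of-unity and power-of-$2$ bookkeeping produces the formula with $v=n(2^{n-1}-1)$ in the real case.

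For the imaginary case only the $2^{n-1}-1$ real quadratic subfields carry nontrivial regulators, while the $2^{n-1}$ imaginary ones have rank $0$ and enter solely through their $w_{k_i}$; repeating the Hadamard-cofactor computation for the maximal real subfield of degree $2^{n-1}$ produces the term $(n-1)(2^{n-2}-1)$, and the additional $2^{n-1}-1$ comes from reconciling $w_K$, the $(2\pi)^{r_2}$ factors and the roots of unity of the imaginary subfields. I expect this lattice/regulator step — together with the precise power-of-two and root-of-unity accounting, which is markedly more delicate in the imaginary case — to be the main obstacle; the zeta-factorization and discriminant cancellation are routine once the character correspondence is set up.
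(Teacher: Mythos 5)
The paper does not actually prove this lemma: it is quoted with a citation to Kuroda \cite{Ku-50}, so there is no internal argument to compare yours against, and your attempt must be judged on its own. Your sketch follows the classical analytic route (which is also the route of the cited literature): factor $\zeta_K=\zeta_{\mathbb{Q}}\prod_{i}L(\cdot,\chi_i)$ using the bijection between nontrivial characters of $(\ZZ/2\ZZ)^n$ and quadratic subfields, cancel discriminants by the conductor--discriminant formula, and convert the regulator ratio into $q(K)$ times an explicit power of $2$ via the character-matrix computation. Your real case is correct and essentially complete: the cofactor identity $|\det X'|=2^{n(2^{n-1}-1)}$ is right, the archimedean and root-of-unity factors cancel exactly (namely $4^{s}/2^{2^n}\cdot 2/2^{s}=1$), and $q(K)$ really is the index of the logarithm lattices there, because the only torsion is $\pm1$, which already lies in $\prod_iE_{k_i}$.

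The imaginary case, however, is a plan rather than a proof, and the two points you defer are exactly the nontrivial ones. First, $q(K)$ is no longer a pure lattice index: one needs the decomposition $q(K)=[\mu_K:\mu']\cdot[\overline{E_K}:\overline{E'}]$, where $E'=\prod_iE_{k_i}$, $\mu'=\mu_K\cap E'$, and the bars denote images in logarithm space; only the second factor is controlled by your covolume argument. Second, your accounting of where the extra $2^{n-1}-1$ comes from is partly misplaced: the $(2\pi)^{r_2}$ factors cancel identically against the imaginary quadratic subfields' archimedean factors, and the power $2^{2^{n-1}-1}$ produced by the complex-place normalization $\log\|x\|_v=2\log|x|_v$ cancels the $2^{2^{n-1}-1}$ coming from the real quadratic subfields' factors $2^{r_1}/w=2$. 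What actually remains is a pure roots-of-unity identity: one must prove $\prod_{j\,\mathrm{imag}}w_{k_j}=|\mu'|\cdot 2^{2^{n-1}-1}$, verifying it not only in the generic case ($w_{k_j}=2$ for all $j$, $|\mu'|=2$) but also when $i\in K$, $\zeta_3\in K$, $\zeta_8\in K$, or $\zeta_{12}\in K$, where both sides change. This identity is true, and with it your computation closes up to give $v=(n-1)(2^{n-2}-1)+2^{n-1}-1$, but as it stands this step is a genuine gap: it is a case analysis that must be carried out, not bookkeeping that falls out of the Hadamard computation.
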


 To use the above lemma we need to recall the values of the $2$-class numbers of some quadratic fields.
 \begin{lemma}\label{class numbers of quadratic field}
 	Let $p\equiv -s \equiv 5\pmod 8$ and $q\equiv 7\pmod 8$ be  three different  primes. Then
 	\begin{enumerate}[\rm $\bullet$]
 	\item By \cite[Corollary 18.4]{connor88}, we have  $ h_2(2)=h_2(qs)=1$.
 	\item By \cite[Corollary 19.7]{connor88}, we have $h_2(pq) =h_2(ps)=h_2(2pq)=h_2(2ps)=2$.
 	\item  By \cite[p. 345]{kaplan76}, we have $h_2(2qs)=2$.
 		\end{enumerate}
 	
 \end{lemma}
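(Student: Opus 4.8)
The plan is to treat this lemma as a field-by-field application of genus theory together with the classification results of Conner--Hurrelbrink \cite{connor88} and Kaplan \cite{kaplan76}; accordingly the only genuine work is to check that our primes meet the hypotheses of those results. First I would record the congruences implied by the assumptions. From $p\equiv -s\equiv 5\pmod 8$ one gets $s\equiv 3\pmod 8$, so $p\equiv 1\pmod 4$ while $q\equiv s\equiv 3\pmod 4$, and the relevant radicands satisfy $pq\equiv 3$, $ps\equiv 7$ and $qs\equiv 5\pmod 8$. In particular each of $\QQ(\sqrt{pq})$, $\QQ(\sqrt{ps})$, $\QQ(\sqrt{2pq})$, $\QQ(\sqrt{2ps})$ and $\QQ(\sqrt{2qs})$ has a radicand divisible by a prime congruent to $3\pmod 4$, so in every one of these real quadratic fields the fundamental unit has norm $+1$; hence the wide $2$-class number is exactly half the narrow one.

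Next I would compute, for each field, the number $t$ of ramified rational primes, since by genus theory the $2$-rank of the narrow class group is $t-1$. For $\QQ(\sqrt 2)$ this is trivial, as $h(\QQ(\sqrt 2))=1$. For $\QQ(\sqrt{qs})$ the ramified primes are $q$ and $s$, so $t=2$ and the narrow $2$-class group is cyclic of $2$-rank $1$; combined with $N(\varepsilon_{qs})=+1$ it then suffices to show that its $4$-rank vanishes. Here Rédei's theory gives a clean answer: the relevant off-diagonal symbols are $\left(\frac{-q}{s}\right)=-\left(\frac{q}{s}\right)$ and $\left(\frac{-s}{q}\right)=-\left(\frac{s}{q}\right)$, whose product equals $\left(\frac{q}{s}\right)\left(\frac{s}{q}\right)=-1$ by quadratic reciprocity (both primes being $\equiv 3\pmod 4$). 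Thus exactly one symbol is nontrivial, the Rédei matrix has full rank, the $4$-rank is $0$, and $h_2(qs)=1$, independently of whether (\ref{case1}) or (\ref{case2}) holds. This is exactly the content of \cite[Corollary 18.4]{connor88}.

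The remaining fields $\QQ(\sqrt{pq})$, $\QQ(\sqrt{ps})$, $\QQ(\sqrt{2pq})$, $\QQ(\sqrt{2ps})$ and $\QQ(\sqrt{2qs})$ each have $t=3$ ramified primes (the odd prime divisors together with $2$, which ramifies because of the factor $4$ or $8$ appearing in the discriminant), so the narrow $2$-class group has $2$-rank $2$. With $N(\varepsilon)=+1$ the target $h_2=2$ amounts to showing that the narrow $2$-class group is $(\ZZ/2\ZZ)^2$, that is, again that the $4$-rank is $0$. I would verify this by the same Rédei-matrix computation, now $3\times 3$, using the congruences $p\equiv 5$, $q\equiv 7$, $s\equiv 3\pmod 8$ to evaluate the entries; this is precisely what \cite[Corollary 19.7]{connor88} records for $h_2(pq)=h_2(ps)=h_2(2pq)=h_2(2ps)=2$ and \cite[p.~345]{kaplan76} for $h_2(2qs)=2$.

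The main obstacle is the $4$-rank computation for the five rank-$2$ fields: unlike the $qs$ case, reciprocity alone does not instantly force the answer, and one must actually evaluate the Rédei symbols (equivalently, check that the quoted corollaries' hypotheses are met) and confirm that the congruences mod $8$ make the $4$-rank vanish in every case. The advantage of invoking \cite{connor88,kaplan76} directly is that these symbol evaluations are already tabulated, so the proof reduces to matching our congruence data to the stated hypotheses rather than redoing each Rédei determinant by hand.
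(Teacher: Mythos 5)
Your proposal is correct in substance, but it takes a genuinely different route from the paper: the paper offers no argument at all for this lemma beyond the citations themselves (the statement is proved by direct appeal to \cite[Corollaries 18.4, 19.7]{connor88} and \cite[p.~345]{kaplan76}), whereas you reconstruct the genus-theoretic mechanism underlying those tabulated results. Your reduction is sound: since $q\equiv s\equiv 3\pmod 4$ divide the relevant radicands, every fundamental unit in sight has norm $+1$, so each narrow $2$-class number is twice the wide one; genus theory gives narrow $2$-rank $t-1$, with $t=2$ for $\QQ(\sqrt{qs})$ (note $qs\equiv 5\pmod 8$, so $2$ is unramified there) and $t=3$ for the five remaining fields; and the claimed values are then exactly equivalent to the vanishing of the R\'edei $4$-rank. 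Your reciprocity argument settles the $qs$ case, and the five $3\times 3$ R\'edei matrices do all have rank $2$ under the congruences $p\equiv 5$, $q\equiv 7$, $s\equiv 3\pmod 8$, \emph{independently} of the values of $\left(\frac{p}{q}\right)$, $\left(\frac{p}{s}\right)$, $\left(\frac{s}{q}\right)$ --- which is precisely why the lemma needs no Legendre-symbol hypotheses, a point your approach makes transparent and the paper's citation-only proof does not. What the paper's route buys is brevity and reliance on standard references; what yours buys is self-containedness. Two caveats: first, for $\QQ(\sqrt{qs})$ the $2\times 2$ R\'edei matrix cannot have ``full rank'' $2$ (its rows sum to zero over $\mathbb{F}_2$); what you need, and what your computation actually gives, is rank $1$, the maximal possible value, whence $4$-rank $=(t-1)-1=0$. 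Second, you defer the five rank-$2$ computations to the cited corollaries rather than executing them, so as written your text is an outline whose last step still leans on \cite{connor88} and \cite{kaplan76}; that is no less rigorous than the paper itself, but to be a fully independent proof those determinant evaluations would need to be written out.
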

 
 Let us recall  with the following lemmas which will be useful later.
  
 \begin{lemma}\cite[Lemma 5]{azizi2000}\label{lm}
 	Let $d>1$  be a square-free integer and $\varepsilon_{d} = x + y\sqrt{d}$, where
 	$x$, $y$ are integers or semi-integers. If $N(\varepsilon_{d}) = 1$, then $2(x + 1)$, $2(x-1)$, $2d(x + 1)$ and
 	$2d(x-1)$ are not squares in $\QQ$.
 \end{lemma}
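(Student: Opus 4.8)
The plan is to argue by contradiction, reducing each of the four cases to the assertion that $\sqrt{\varepsilon_d}$ lies in $\QQ(\sqrt d)$, which is impossible since $\varepsilon_d$ is the fundamental unit. Write $\varepsilon_d = x + y\sqrt d$ with $\varepsilon_d > 1$; the hypothesis $N(\varepsilon_d)=1$ reads $x^2 - dy^2 = 1$, that is,
$$(x-1)(x+1) = dy^2.$$
A rational unit of $\QQ(\sqrt d)$ equals $\pm 1$ and is not fundamental, so $y\neq 0$ and $x>1$; in particular the four quantities $2(x\pm 1)$ and $2d(x\pm 1)$ are all positive, and the degenerate possibilities $y=0$ or $a=0$ below are excluded from the outset by $dy^2 = x^2-1>0$.

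Now suppose, for contradiction, that one of these quantities equals $a^2$ for some nonzero $a\in\QQ$. In each case I would exhibit an explicit element $\eta = u + v\sqrt d \in \QQ(\sqrt d)$ whose square is $\varepsilon_d$. For instance, if $2(x+1)=a^2$ I set $u=a/2$ and $v=y/a$; the factorization above gives $u^2 + dv^2 = \tfrac{x+1}{2} + \tfrac{(x-1)(x+1)}{a^2} = \tfrac{x+1}{2}+\tfrac{x-1}{2} = x$ and $2uv = y$, so that $\eta^2 = x+y\sqrt d = \varepsilon_d$. The remaining three cases are handled by the analogous choices $u=a/2,\ v=y/a$ when $2(x-1)=a^2$, and $u=yd/a,\ v=a/(2d)$ when $2d(x+1)=a^2$ or $2d(x-1)=a^2$. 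In every case the identity $(x-1)(x+1)=dy^2$ turns the verification of $u^2+dv^2 = x$ and $2uv = y$ into a one-line computation, yielding $\eta^2 = \varepsilon_d$.

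It remains to see that such an $\eta$ contradicts the fundamentality of $\varepsilon_d$. Since $\eta$ is a root of the monic polynomial $T^2 - \varepsilon_d$ with coefficients in the ring of integers $\OO$ of $\QQ(\sqrt d)$, and $\OO$ is integrally closed, we get $\eta\in\OO$; moreover $N(\eta)^2 = N(\eta^2) = N(\varepsilon_d) = 1$ forces $N(\eta)=\pm 1$, so $\eta$ is a unit. As $\varepsilon_d>0$, the element $\eta$ is real, and choosing the positive square root gives a unit with $1 < \eta = \sqrt{\varepsilon_d} < \varepsilon_d$, contradicting the minimality of the fundamental unit $\varepsilon_d$. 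Hence none of $2(x+1)$, $2(x-1)$, $2d(x+1)$, $2d(x-1)$ is a square in $\QQ$.

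I expect the only genuine care needed to be bookkeeping: treating the four explicit square roots uniformly, and confirming that the argument is insensitive to whether $x,y$ are integers or semi-integers. The latter causes no difficulty, since the statement concerns only squares in $\QQ$, and the integrality of $\eta$ is deduced formally from integral closure rather than from the parity of its coordinates.
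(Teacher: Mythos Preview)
Your proof is correct. The paper itself does not prove this lemma at all: it simply cites it as \cite[Lemma~5]{azizi2000} and uses it as a black box. So there is no ``paper's own proof'' to compare against.

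Your argument --- supposing one of the four quantities is a rational square $a^2$, exhibiting an explicit $\eta = u+v\sqrt d\in\QQ(\sqrt d)$ with $\eta^2=\varepsilon_d$, and then invoking integral closure plus the minimality of the fundamental unit --- is clean and complete. The four case-by-case verifications of $u^2+dv^2=x$ and $2uv=y$ all check out against the identity $(x-1)(x+1)=dy^2$, and the passage from $\eta^2\in\OO$ to $\eta\in\OO$ via integral closure correctly sidesteps any fuss over whether $x,y$ are integers or half-integers. One cosmetic remark: in the sentence ``choosing the positive square root gives a unit with $1<\eta<\varepsilon_d$'', it would be slightly tidier to say that replacing $\eta$ by $-\eta$ if necessary yields a positive unit, since the $\eta$ you wrote down could a priori be the negative root; but this is harmless.
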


  \begin{lemma}\label{lm 4}
 	Let	$p\equiv -q\equiv 5\pmod 8$ be two primes such that $\left(\frac{	p}{	q}\right)=1$.
 	\begin{enumerate}[\rm 1.]
 		\item Let  $x$ and $y$   be two integers such that
 		$ \varepsilon_{2pq}=x+y\sqrt{2pq}$. Then
 		\begin{enumerate}[\rm a.]
 			\item $p(x-1)$ is a square in $\NN$,
 			\item 	  $\sqrt{2\varepsilon_{2pq}}=y_1\sqrt{p} +y_2\sqrt{2q}$ and 	$2= -py_1^2+2qy_2^2$,  for some integers $y_1$ and $y_2$.
 		\end{enumerate}
 		\item  There are two integers     $a$ and $b$   such that
 		$ \varepsilon_{pq}=a+b\sqrt{pq}$. Then
 		\begin{enumerate}[\rm a.]
 			\item $2p(a+1)$ is a square in $\NN$,
 			\item  $\sqrt{\varepsilon_{pq}}=b_1\sqrt{p}+b_2\sqrt{q}$ and $1=pb_1^2-qb_2^2$,  for some integers $b_1$ and $b_2$.
 		\end{enumerate}
 	\end{enumerate}
 \end{lemma}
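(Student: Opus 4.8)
The plan is to reduce both parts to Pell-type equations and decide, by a combination of three tools, exactly how the primes $2,p,q$ split between two almost-coprime factors. First I would record that, since the prime $q\equiv 3\pmod 4$ divides both $pq$ and $2pq$, the value $-1$ is not a norm, so $N(\varepsilon_{pq})=N(\varepsilon_{2pq})=1$; moreover $pq\equiv 3$ and $2pq\equiv 2\pmod 4$ force the rings of integers to be $\ZZ[\sqrt{pq}]$ and $\ZZ[\sqrt{2pq}]$, which is what guarantees that $\varepsilon_{pq}=a+b\sqrt{pq}$ and $\varepsilon_{2pq}=x+y\sqrt{2pq}$ really have integral coefficients. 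The norm conditions then read $(a-1)(a+1)=pq\,b^2$ and $(x-1)(x+1)=2pq\,y^2$, so everything comes down to the distribution of $2,p,q$ between the two factors. Throughout I will use $\left(\frac{2}{p}\right)=\left(\frac{2}{q}\right)=-1$ (from $p\equiv 5$, $q\equiv 3\pmod 8$) and $\left(\frac{q}{p}\right)=\left(\frac{p}{q}\right)=1$ (reciprocity, as $p\equiv 1\pmod 4$).

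For part 2 I would begin with a congruence modulo $8$: from $a^2-pq\,b^2=1$ and $pq\equiv 7\pmod 8$ one gets $a^2+b^2\equiv 1\pmod 8$, leaving only ``$a$ odd, $4\mid b$'' or ``$4\mid a$, $b$ odd''. The second alternative is discarded by the same machinery (a reduction modulo $4$ or a residue symbol in each of its four subcases). In the first alternative, writing $a-1=2u$, $a+1=2v$ with $v-u=1$ and $uv=pq\,c^2$, the squarefree kernels of $u$ and $v$ give a coprime factorization of $pq$, hence four subcases. Lemma \ref{lm} (which asserts that none of $2(a\pm1)$, $2pq(a\pm1)$ is a square, i.e. none of $u,v,pqu,pqv$ is a square) eliminates the two cases where $u$ or $v$ is a perfect square; a parity check modulo $4$ eliminates the case $u=p r^2,\ v=q t^2$, because it forces $t^2+r^2\equiv 3\pmod 4$. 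One is left with $a-1=2q r^2$, $a+1=2p t^2$, so $2p(a+1)=(2pt)^2$, and since $(t\sqrt p+r\sqrt q)^2=a+b\sqrt{pq}$ we obtain $\sqrt{\varepsilon_{pq}}=t\sqrt p+r\sqrt q$ with $pt^2-qr^2=1$, which is exactly (2a)--(2b).

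Part 1 is the same argument with one more prime. The congruence $x^2+2y^2\equiv 1\pmod 8$ forces $x$ odd and $y$ even, so with $x-1=2u$, $x+1=2v$, $v-u=1$ one has $uv=2pq\,y'^2$, and the squarefree kernels of $u,v$ now run over the eight coprime factorizations of $2pq$. Here Lemma \ref{lm} says $u,v,2pqu,2pqv$ are non-squares, discarding the two cases in which $u$ or $v$ is a square; each of the five remaining ``mixed'' cases is killed by reducing $v-u=1$ modulo $p$ or $q$ and reading off a forbidden symbol (for instance $v=2q z^2,\ u=p w^2$ gives $2q z^2\equiv 1\pmod p$, contradicting $\left(\frac{2}{p}\right)\left(\frac{q}{p}\right)=-1$). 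The surviving case is $x-1=4p w^2$, $x+1=2q z^2$, whence $p(x-1)=(2pw)^2$ and $\big(2w\sqrt p+z\sqrt{2q}\big)^2=2\varepsilon_{2pq}$, giving $\sqrt{2\varepsilon_{2pq}}=2w\sqrt p+z\sqrt{2q}$ with $-p(2w)^2+2q z^2=2$, which is (1a)--(1b).

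The main obstacle is that no single elimination rule suffices: Lemma \ref{lm} only removes the ``pure square'' factorizations, and reductions modulo $4$ only remove those incompatible with $p\equiv 1$, $q\equiv 3$. The genuinely load-bearing step is the systematic bookkeeping of the quadratic residue symbols --- crucially that $2$ is a non-residue of both $p$ and $q$ while $p$ is a residue of $q$ --- which is what isolates the unique admissible factorization in each part. I also expect to spend care keeping the coefficients of the extracted square roots integral (ensured by the shape of the rings of integers above) and tracking which of $u,v$ carries the factor $2$, so that the final identities $pt^2-qr^2=1$ and $-p(2w)^2+2q z^2=2$ emerge with the correct signs.
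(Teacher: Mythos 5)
Your proof is correct. Note, however, that the paper does not actually prove Lemma \ref{lm 4} at all: its ``proof'' is the citation to \cite[Lemma 2.6]{ChemsUnits9}, so your self-contained argument supplies what the paper only references. Methodologically, what you do is exactly the template the paper carries out in detail for the sibling Lemma \ref{lm 6}: use $N(\varepsilon)=1$ and coprime factorization of $(x-1)(x+1)$ (with Lemma \ref{lm} removing the ``pure square'' splittings), then eliminate all but one splitting by Legendre-symbol or mod-$4$ obstructions, the unique survivor giving both the square statement and the explicit square root of the unit. The only organizational difference is that you first pin down the parities of the coefficients by a congruence modulo $8$ and then enumerate the coprime factorizations (eight in part 1, four in each alternative of part 2), whereas the paper's version of the argument lists the candidate systems directly; the decisive inputs are identical, namely $\left(\frac{2}{p}\right)=\left(\frac{2}{q}\right)=-1$ together with $\left(\frac{p}{q}\right)=\left(\frac{q}{p}\right)=1$, and all of your individual eliminations (e.g.\ $qt^2-pr^2=1$ forcing $t^2+r^2\equiv 3\pmod 4$, and $2qz^2\equiv 1 \pmod p$ contradicting $\left(\frac{2q}{p}\right)=-1$) check out, as do the final identities $pt^2-qr^2=1$ and $-p(2w)^2+2qz^2=2$.
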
	
 \begin{proof}
 	See \cite[Lemma 2.6]{ChemsUnits9}.
 \end{proof}

 \begin{lemma}\label{lm 5}
 	Let	$q_{1}\equiv 7\pmod 8$   and	$q_{2}\equiv  3\pmod 8$	 be two primes such that $\left(\frac{	q_{2}}{	q_{1}}\right)=1$.
 	\begin{enumerate}[\rm 1.]
 		\item Let  $x$ and $y$   be two integers such that
 		$ \varepsilon_{q_{1}q_{2}}=x+y\sqrt{q_{1}q_{2}}$. Then
 		\begin{enumerate}[\rm a.]
 			\item $2q_{1}(x-1)$ is a square in $\NN$,
 			\item 	  $\sqrt{\varepsilon_{q_{1}q_{2}}}=y_1\sqrt{q_{1}} +y_2\sqrt{q_{2}}$ and 	$1= -q_{1}y_1^2+q_{2}y_2^2$,  for some integers $y_1$ and $y_2$ such that $y=2y_1y_2$.
 		\end{enumerate}

 		\item  There are two integers     $a$ and $b$   such that
 		$ \varepsilon_{2q_{1}q_{2}}=a+b\sqrt{2q_{1}q_{2}}$. Then
 		\begin{enumerate}[\rm a.]
 			\item $2q_{1}(a-1)$ is a square in $\NN$,
 			\item  $\sqrt{2 \varepsilon_{2q_{1}q_{2}}}=b_1\sqrt{2q_{1}}+b_2\sqrt{q_{2}}$ and $2=-2q_{1}b_1^2+q_{2}b_2^2$,  for some integers $b_1$ and $b_2$ such that $b=b_1b_2$.
 		\end{enumerate}
 	\end{enumerate}
 \end{lemma}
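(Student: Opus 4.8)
The plan is to convert both assertions into statements about the factorizations $x^2-1=q_1q_2y^2$ and $a^2-1=2q_1q_2b^2$, and then to isolate the correct squarefree splitting by combining Lemma \ref{lm} with the quadratic residue data attached to $q_1$ and $q_2$. First I would record the arithmetic facts that drive everything. Since $q_1\equiv 7\pmod 8$ and $q_2\equiv 3\pmod 8$ we have $q_1\equiv q_2\equiv 3\pmod 4$, so both $q_1q_2$ and $2q_1q_2$ carry a prime factor $\equiv 3\pmod 4$ and are not sums of two squares; hence $N(\varepsilon_{q_1q_2})=N(\varepsilon_{2q_1q_2})=1$. The relevant symbol values are $\left(\frac{-1}{q_1}\right)=\left(\frac{-1}{q_2}\right)=-1$, $\left(\frac{2}{q_1}\right)=1$, $\left(\frac{2}{q_2}\right)=-1$, the given $\left(\frac{q_2}{q_1}\right)=1$, and, by reciprocity (both primes being $\equiv 3\pmod 4$), $\left(\frac{q_1}{q_2}\right)=-1$. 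Finally, reducing $x^2-q_1q_2y^2=1$ modulo $8$ (using $q_1q_2\equiv 5$) forces $x$ odd and $y$ even, and likewise $a$ odd and $b$ even.

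For part 1, I would write $\left(\frac{x-1}{2}\right)\left(\frac{x+1}{2}\right)=q_1q_2\left(\frac y2\right)^2$ and use that the two factors on the left are coprime (they differ by $1$). This forces a squarefree splitting $\frac{x-1}{2}=d_1w_1^2$, $\frac{x+1}{2}=d_2w_2^2$ with $d_1d_2=q_1q_2$, so $(d_1,d_2)\in\{(1,q_1q_2),(q_1q_2,1),(q_1,q_2),(q_2,q_1)\}$. Lemma \ref{lm} discards the two cases with all primes on one side, since they would make $2(x-1)$ or $2(x+1)$ a square; and reducing the identity $d_2w_2^2-d_1w_1^2=1$ modulo $q_1$ discards $(q_2,q_1)$, because it would give $\left(\frac{-q_2}{q_1}\right)=1$, whereas the recorded values force $\left(\frac{-q_2}{q_1}\right)=-1$. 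Thus $\frac{x-1}{2}=q_1y_1^2$ and $\frac{x+1}{2}=q_2y_2^2$, whence $2q_1(x-1)=(2q_1y_1)^2$ at once; adding and subtracting the two equalities gives $x=q_1y_1^2+q_2y_2^2$, $y=2y_1y_2$ and $q_2y_2^2-q_1y_1^2=1$, and squaring $y_1\sqrt{q_1}+y_2\sqrt{q_2}$ identifies it with $\sqrt{\varepsilon_{q_1q_2}}$, establishing 1(a) and 1(b).

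Part 2 runs along identical lines, the only new feature being the extra prime $2$ to distribute: from $\left(\frac{a-1}{2}\right)\left(\frac{a+1}{2}\right)=2q_1q_2\left(\frac b2\right)^2$ the divisor pair $d_1d_2=2q_1q_2$ now admits eight complementary factorizations. Lemma \ref{lm} again removes the two extreme ones, and the remaining five parasitic splittings are eliminated one by one by reducing $d_2w_2^2-d_1w_1^2=1$ modulo $q_1$ or $q_2$ and comparing with the recorded symbols; the point is that the residues $q_1\equiv 7$, $q_2\equiv 3\pmod 8$ together with the single hypothesis $\left(\frac{q_2}{q_1}\right)=1$ leave exactly one survivor, $\frac{a-1}{2}=q_1b_1^2$ and $\frac{a+1}{2}=2q_2c^2$. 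Writing $b_2=2c$ then gives $2q_1(a-1)=(2q_1b_1)^2$, $b=b_1b_2$, $q_2b_2^2-2q_1b_1^2=2$, and $\sqrt{2\varepsilon_{2q_1q_2}}=b_1\sqrt{2q_1}+b_2\sqrt{q_2}$.

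The routine ingredients are the two congruences modulo $8$ and the squaring identities. The real work—and the place where every hypothesis is consumed—is the case elimination of the second and third paragraphs: one must check that the supplementary laws for $q_1,q_2$ and the reciprocity value $\left(\frac{q_1}{q_2}\right)=-1$ are exactly strong enough to kill all the parasitic factorizations and isolate the one yielding the stated square root. I would also flag that when $q_1q_2\equiv 5\pmod 8$ the unit $\varepsilon_{q_1q_2}$ may have semi-integral coordinates; in that event the same analysis is carried out on $(X-2)(X+2)=q_1q_2Y^2$ with $\varepsilon_{q_1q_2}=\tfrac12(X+Y\sqrt{q_1q_2})$, in accordance with the semi-integer convention of Lemma \ref{lm}.
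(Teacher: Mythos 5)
Your proposal is correct, and it differs from the paper in an essential way: the paper gives no argument for this lemma at all, disposing of it with the single line ``See \cite[Lemma 7]{chemszekhniniazizilambdas}''. What you have written is a self-contained proof by exactly the technique the paper uses in-house for the sibling statement, Lemma~\ref{lm 6}: norm $+1$ plus parity gives the coprime splitting $\frac{x-1}{2}\cdot\frac{x+1}{2}=q_1q_2\left(\frac{y}{2}\right)^2$, Lemma~\ref{lm} kills the two extreme factorizations, and the symbol data $\left(\frac{-1}{q_1}\right)=\left(\frac{-1}{q_2}\right)=-1$, $\left(\frac{2}{q_1}\right)=1$, $\left(\frac{2}{q_2}\right)=-1$, $\left(\frac{q_2}{q_1}\right)=1$, $\left(\frac{q_1}{q_2}\right)=-1$ eliminates the rest. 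I checked your case analysis: in part 1 the survivor is indeed $(q_1,q_2)$; in part 2, of the eight splittings of $2q_1q_2$, the five middle ones --- including $(2,q_1q_2)$ and $(q_1q_2,2)$, which Lemma~\ref{lm} does \emph{not} reach, since $2(a\pm1)=8w^2$ is not a square form --- all die under reduction mod $q_1$ or $q_2$, leaving exactly $\frac{a-1}{2}=q_1b_1^2$, $\frac{a+1}{2}=2q_2c^2$, which with $b_2=2c$ yields 2(a) and 2(b) verbatim. Your closing caveat is also well taken and touches a point the paper's statement glosses over: the hypotheses do not force integral coordinates in part 1 --- for instance $q_1=7$, $q_2=11$ satisfy $q_1\equiv 7$, $q_2\equiv 3 \pmod 8$ and $\left(\frac{q_2}{q_1}\right)=1$, yet $\varepsilon_{77}=\frac{9+\sqrt{77}}{2}$ is semi-integral --- and your adaptation via $(X-2)(X+2)=q_1q_2Y^2$ is the right fix, recovering the conclusions with semi-integral $y_1,y_2$ (e.g.\ $\sqrt{\varepsilon_{77}}=\frac{\sqrt{7}+\sqrt{11}}{2}$). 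In short, your route buys a verifiable, elementary proof of a statement the paper imports as a black box, at the cost of the brevity the citation affords.
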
	
 \begin{proof}
 	See \cite[Lemma 7]{chemszekhniniazizilambdas}.
 \end{proof}
We will show the following  results that will be useful in the sequel.
 \begin{lemma}\label{lm 6}
	Let	$p\equiv 5\pmod 8$   and	$q\equiv  7\pmod 8$	 be two primes such that $\left(\frac{	p}{	q}\right)=1$.
	\begin{enumerate}[\rm 1.]
		\item Let $x$ and $y$ be two integers  such that
		$ \varepsilon_{2pq}=x+y\sqrt{2pq}$. Then
		\begin{enumerate}[\rm a.]
			\item $p(x+1)$ is a square in $\NN$,
			\item  $\sqrt{2 \varepsilon_{2pq}}=y_1\sqrt{p}+y_2\sqrt{2q}$ and $2=py_1^2-2qy_2^2$,  for some integers $y_1$ and $y_2$.
		\end{enumerate}
		\item Let  $a$ and $b$   be two integers such that
		$ \varepsilon_{pq}=a+b\sqrt{pq}$. Then
		\begin{enumerate}[\rm a.]
			\item $2p(a+1)$ is a square in $\NN$,
			\item 	  $\sqrt{\varepsilon_{pq}}=b_1\sqrt{p} +b_2\sqrt{q}$ and 	$1= pb_1^2-qb_2^2$,  for some integers $b_1$ and $b_2$.
		\end{enumerate}
	\end{enumerate}
\end{lemma}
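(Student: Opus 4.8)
The plan is to mirror the arguments behind Lemmas \ref{lm 4} and \ref{lm 5}, the engine being that both fundamental units in play have norm $+1$. Since $q\equiv 7\equiv 3\pmod 4$ divides both $pq$ and $2pq$, neither $\varepsilon_{pq}$ nor $\varepsilon_{2pq}$ can have norm $-1$; hence $N(\varepsilon_{pq})=N(\varepsilon_{2pq})=1$, which is exactly what makes Lemma \ref{lm} applicable. I would also fix once and for all the quadratic residues that drive the case analysis: from $p\equiv 5\pmod 8$ one has $\left(\frac{2}{p}\right)=-1$ and $\left(\frac{-1}{p}\right)=1$, from $q\equiv 7\pmod 8$ one has $\left(\frac{2}{q}\right)=1$ and $\left(\frac{-1}{q}\right)=-1$, and since $p\equiv 1\pmod 4$ quadratic reciprocity turns the hypothesis $\left(\frac{p}{q}\right)=1$ into $\left(\frac{q}{p}\right)=1$.

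For part $2$, I would write $N(\varepsilon_{pq})=1$ as $(a-1)(a+1)=pqb^{2}$. A short congruence check rules out $a$ even (each of the four coprime splittings of $a\pm 1$ then violates one of the residue conditions above), so $a$ is odd; putting $a-1=2m$, $a+1=2(m+1)$, $b=2c$ reduces matters to $m(m+1)=pqc^{2}$ with $m,m+1$ coprime. The coprimality forces $(m,m+1)$ to be one of $(c_1^{2},pqc_2^{2})$, $(pc_1^{2},qc_2^{2})$, $(qc_1^{2},pc_2^{2})$, $(pqc_1^{2},c_2^{2})$. The first and last make $2(a-1)$ respectively $2pq(a-1)$ a perfect square, which Lemma \ref{lm} forbids; reducing $(pc_1^{2},qc_2^{2})$ modulo $q$ yields $\left(\frac{p}{q}\right)=\left(\frac{-1}{q}\right)=-1$, contradicting the hypothesis. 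Only $(qc_1^{2},pc_2^{2})$ survives, and it is consistent because modulo $p$ it asks only for $\left(\frac{q}{p}\right)=\left(\frac{-1}{p}\right)=1$. From $a-1=2qc_1^{2}$ and $a+1=2pc_2^{2}$ I read off $2p(a+1)=(2pc_2)^{2}$ (part 2(a)), while $a=pc_2^{2}+qc_1^{2}$, $b=2c_1c_2$ and $1=pc_2^{2}-qc_1^{2}$ give $\varepsilon_{pq}=(c_2\sqrt{p}+c_1\sqrt{q})^{2}$, i.e.\ $\sqrt{\varepsilon_{pq}}=c_2\sqrt{p}+c_1\sqrt{q}$ with $1=pc_2^{2}-qc_1^{2}$ (part 2(b), taking $b_1=c_2$, $b_2=c_1$).

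For part $1$ the scheme is identical but carries the extra prime $2$. Here $x^{2}-2pqy^{2}=1$ forces $x$ odd directly; writing $x-1=2m$, $x+1=2(m+1)$ and $y=2y'$ reduces to $m(m+1)=2pqy'^{2}$ with $m,m+1$ coprime, so now there are eight coprime splittings $(m,m+1)=(Ar^{2},Bs^{2})$ with $AB=2pq$. Lemma \ref{lm} removes the two splittings with $A=1$ or $B=1$, and the residue conditions above remove the remaining five: for instance $(2,pq)$ forces $\left(\frac{-2}{p}\right)=1$, $(p,2q)$ and $(pq,2)$ force $\left(\frac{2}{p}\right)=1$, while $(2p,q)$ and $(2q,p)$ force $\left(\frac{-1}{q}\right)=1$ respectively $\left(\frac{-1}{p}\right)=-1$, all of which are false. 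The survivor is $(m,m+1)=(qr^{2},2ps^{2})$, whence $x+1=4ps^{2}$ gives $p(x+1)=(2ps)^{2}$ (part 1(a)), and $x=2ps^{2}+qr^{2}$, $y=2rs$, $1=2ps^{2}-qr^{2}$ yield $2\varepsilon_{2pq}=(2s\sqrt{p}+r\sqrt{2q})^{2}$, i.e.\ $\sqrt{2\varepsilon_{2pq}}=2s\sqrt{p}+r\sqrt{2q}$ with $2=py_1^{2}-2qy_2^{2}$ on setting $y_1=2s$, $y_2=r$ (part 1(b)). The main obstacle throughout is the bookkeeping that guarantees \emph{exactly one} splitting survives each elimination; the factor of $2$ in part $1$ is the delicate point, since one must track precisely which factor carries the $2$ and confirm that Lemma \ref{lm} together with the five residue conditions leaves no residual ambiguity.
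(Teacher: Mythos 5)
Your proposal is correct and follows essentially the same route as the paper: both reduce the norm equation $N(\varepsilon)=1$ to a short list of coprime splittings of $(x-1)(x+1)$ (resp. $(a-1)(a+1)$), eliminate all but one splitting using Lemma \ref{lm} together with the residue facts $\left(\frac{2}{p}\right)=-1$, $\left(\frac{-1}{p}\right)=1$, $\left(\frac{-2}{q}\right)=-1$, $\left(\frac{q}{p}\right)=\left(\frac{p}{q}\right)=1$, and then read off the square root of the unit from the surviving splitting. The differences are purely organizational: you sort the cases by the parity of $x$ and $a$ and justify $N(\varepsilon_{pq})=N(\varepsilon_{2pq})=1$ via $q\equiv 3\pmod 4$, whereas the paper cites the norm as known and encodes the parity in its three $\pm$-systems, with Lemma \ref{lm} silently removing the $(2,pq)$-type splittings.
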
	
\begin{proof}
	\begin{enumerate}[\rm 1.]
		\item It is known that $N(\varepsilon_{2pq}) = 1$. Then, by the unique factorization in $\ZZ$ and
		Lemma \ref{lm}, there exist some integers $y_{1}$ and $y_{2}$ $(y= y_{1}y_{1})$ such that
		$
		(1)\left \{
		\begin{array}{rcl}\label{1}
			x\pm 1&=&y_{1}^2, \\
			x\mp 1&=&2pqy_{2}^2,
		\end{array}
		\right.
		$
		$
		(2)\left \{
		\begin{array}{rcl}\label{2}
			x\pm 1&=&py_{1}^2, \\
			x\mp 1&=&2qy_{2}^2,
		\end{array}
		\right.
		$
		or 
		$
		(3)	\left \{
		\begin{array}{rcl}\label{3}
			x\pm 1&=&2py_{1}^2, \\
			x\mp 1&=&qy_{2}^2.
		\end{array}
		\right.
		$
		\begin{enumerate}[\rm $\star$]
			\item System (1) can not occur since it implies $1=(\frac{y_{1}^2}{p})=(\frac{x\pm 1}{p})=(\frac{x\mp 1\pm 2}{p})=(\frac{\pm 2}{p})=(\frac{2}{p})=-1,$  which is absurd.
			\item Similarly, system  (3) can not occur either since it implies $1=(\frac{q}{p})=(\frac{qy_{2}^2}{p})=(\frac{x\mp 1}{p})=(\frac{\pm 2}{p})=(\frac{2}{p})=-1,$  which is absurd.
			\item Suppose that  $\left \{
			\begin{array}{rcl}\label{2}
				x - 1&=&py_{1}^2, \\
				x+1&=&2qy_{2}^2.
			\end{array}
			\right.$ Then $1=(\frac{py_{1}^2}{q})=(\frac{x-1}{q})=(\frac{x+1-2}{q})=(\frac{-2}{q})=-1,$ which is also impossible.

			Thus, the only possible case is  $\left \{
			\begin{array}{rcl}\label{2}
				x +1&=&py_{1}^2, \\
				x- 1&=&2qy_{2}^2,
			\end{array}
			\right.$ which implies that $\sqrt{2\varepsilon_{2pq}}=y_1\sqrt{p} +y_2\sqrt{2q}$ and 	$2= py_1^2-2qy_2^2$.
		\end{enumerate}
		\item It is known that $N(\varepsilon_{pq}) = 1$. Then, by Lemma \ref{lm}, we have: 
		
		$
		(1)\left \{
		\begin{array}{rcl}\label{1}
			a\pm 1&=&pb_{1}^2, \\
			a\mp 1&=&qb_{2}^2,
		\end{array}
		\right.
		$
		$
		(2)\left \{
		\begin{array}{rcl}\label{2}
			a\pm 1&=&b_{1}^2, \\
			a\mp 1&=&pqb_{2}^2,
		\end{array}
		\right.
		$
		or
		$
		(3)	\left \{
		\begin{array}{rcl}\label{3}
			a\pm 1&=&2pb_{1}^2, \\
			a\mp 1&=&2qb_{2}^2,
		\end{array}
		\right.
		$
		
		for some integers $b_{1}$ and $b_{2}$  such that $(b= b_{1}b_{1})$ or $(b= 2b_{1}b_{1}).$
		
		\begin{enumerate}[\rm $\star$]
			\item System (1) can not occur since it implies $1=(\frac{q}{p})=(\frac{qb_{2}^2}{p})=(\frac{a\mp 1}{p})=(\frac{a\pm 1\mp 2}{p})=(\frac{\pm 2}{p})=(\frac{2}{p})=-1,$  which is absurd.
			\item Similarly, system  (2) can not occur either since it implies $1=(\frac{b_{1}^2}{p})=(\frac{a\pm 1}{p})=(\frac{\mp 2}{p})=(\frac{2}{p})=-1,$  which is absurd.
			\item Suppose that  $\left \{
			\begin{array}{rcl}\label{2}
				a - 1&=&2pb_{1}^2, \\
				a+ 1&=&2qb_{2}^2.
			\end{array}
			\right.$ Then $1=(\frac{2pb_{1}^2}{q})=(\frac{a-1}{q})=(\frac{a+1-2}{q})=(\frac{-2}{q})=-1,$ which is also impossible.

			Thus, the only possible case is  $\left \{
			\begin{array}{rcl}\label{2}
				a + 1&=&2pb_{1}^2, \\
				a-1&=&2qb_{2}^2,
			\end{array}
			\right.$ which implies that $\sqrt{\varepsilon_{pq}}=b_1\sqrt{p} +b_2\sqrt{q}$ and 	$1= pb_1^2-qb_2^2$.
		\end{enumerate}
	\end{enumerate}
\end{proof}

\begin{lemma}\label{lemmarank}
	 Let $p$, $q$ and $s$ be three different prime integers such that  $p\equiv -s\equiv 5\pmod 8$ and $q\equiv7\pmod 8$ with $\left(\frac{	p}{	q}\right)=\left(\frac{	p}{	s}\right)$. Then, The rank of the $2$-class group of $\LL^+$ is greater than or equal to $2$.
\end{lemma}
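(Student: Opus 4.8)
The plan is to reformulate the statement via class field theory and then to exhibit two independent unramified quadratic extensions of $\LL^+$. Recall that the $2$-rank of $\mathrm{Cl}(\LL^+)$ equals the maximal number of quadratic extensions of $\LL^+$ that are unramified at every place, finite and infinite; equivalently, it is the $\mathbb{F}_2$-dimension of the group of square classes represented by a totally positive element $\alpha\in\LL^+$ whose principal ideal $(\alpha)$ is the square of an ideal. It therefore suffices to produce two such classes that are independent over $\mathbb{F}_2$.

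First I would record the ramification in $\LL^+/\QQ$. Each of the primes $2,p,q,s$ stays unramified throughout a quartic subfield of $\LL^+$ (for instance $p$ is unramified in $\QQ(\sqrt2,\sqrt{qs})$ and $2$ is unramified in $\QQ(\sqrt{pq},\sqrt{ps})$), so each has ramification index exactly $2$ in $\LL^+/\QQ$; moreover $\LL^+$ is totally real. This yields the first class at once: take $N=\LL^+(\sqrt p)=\QQ(\sqrt2,\sqrt p,\sqrt q,\sqrt s)$. At every odd prime the rational integer $p$ has even valuation in $\LL^+$ (equal to $2$ above $p$, since the ramification index is even, and to $0$ above $q$ and $s$), so $N/\LL^+$ is unramified at all finite odd primes; since $p\equiv1\pmod4$ the prime $2$ is unramified in $\QQ(\sqrt p)/\QQ$ and hence in $N/\LL^+$; finally $N$ is totally real, so no infinite place ramifies. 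Thus $N/\LL^+$ is unramified everywhere and the square class of $p$ is a first nontrivial class.

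The core of the argument is to produce a second class independent of the class of $p$, and this is exactly where the hypothesis $\left(\frac pq\right)=\left(\frac ps\right)$ is needed. Using $p\equiv1\pmod4$ and quadratic reciprocity, this hypothesis is equivalent to $\left(\frac{qs}{p}\right)=1$, i.e. the prime $p$ splits in $\QQ(\sqrt{qs})$, say $p\,\mathcal{O}_{\QQ(\sqrt{qs})}=\mathfrak{p}\bar{\mathfrak{p}}$. I would follow the ideal $\mathfrak p$ up to $\LL^+$ and show that its class is a nontrivial $2$-torsion class that is not of genus type, thereby producing a totally positive $\alpha$ with $(\alpha)$ a square ideal lying in a new square class; the explicit square-root expansions of the fundamental units provided by Lemmas~\ref{lm 4}, \ref{lm 5} and \ref{lm 6} (such as $\sqrt{\varepsilon_{pq}}=b_1\sqrt p+b_2\sqrt q$ and $\sqrt{2\varepsilon_{2pq}}=y_1\sqrt p+y_2\sqrt{2q}$, together with their analogues in which $q$ is replaced by $s$) are the tools for writing down such an $\alpha$ and for controlling it $2$-adically.

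I expect the main obstacle to be twofold. First, one must verify that the second extension is unramified at the dyadic primes; since $2$ ramifies in $\LL^+$ this is a genuine $2$-adic congruence check, governed by the residues $p\equiv5,\ s\equiv3,\ q\equiv7\pmod8$ and by the $2$-adic shape of the fundamental units recorded in the lemmas above. Second, one must prove independence from $N$, namely that $\alpha$ and $p\alpha$ are both non-squares in $\LL^+$; the subtlety here is that the class obtained is genuinely non-genus — the wide genus field of $\LL^+$ is precisely $N$, so the class of $p$ already exhausts the genus contribution — and it is exactly the extra Legendre condition that forces the rank to exceed the genus rank. Once these two points are settled, the square classes of $p$ and of $\alpha$ span a two-dimensional $\mathbb{F}_2$-subspace, whence the $2$-rank of $\mathrm{Cl}(\LL^+)$ is at least $2$.
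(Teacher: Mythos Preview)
Your first step is sound: $N=\LL^+(\sqrt p)=\QQ(\sqrt2,\sqrt p,\sqrt q,\sqrt s)$ is indeed unramified everywhere over $\LL^+$, giving one unit towards the $2$-rank. But the proposal does not actually produce the second independent class. You outline a strategy---lift a prime $\mathfrak p$ of $\QQ(\sqrt{qs})$ above $p$ into $\LL^+$, extract a $2$-torsion ideal class, and control it via the explicit unit expansions of Lemmas~\ref{lm 4}--\ref{lm 6}---yet you explicitly leave both ``main obstacles'' (the $2$-adic unramifiedness of the associated extension and its independence from $N$) unresolved. These are not routine verifications to be filled in later; they constitute the entire content beyond the genus step, and nothing in what you have written forces either conclusion. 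As it stands, this is a plan rather than a proof.

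The paper's argument is structurally different and sidesteps these difficulties altogether. It works relative to the biquadratic subfield $L_3=\QQ(\sqrt2,\sqrt{qs})$, which has trivial $2$-class number, and applies the ambiguous-class rank formula for the quadratic extension $\LL^+/L_3$: one has $r_2(\mathbf{Cl}(\LL^+))=t-1-e_d$, where $t$ counts the primes of $L_3$ ramified in $\LL^+$ and $2^{e_d}=[E_{L_3}:E_{L_3}\cap N_{\LL^+/L_3}(\LL^+)]$. The hypothesis $\left(\frac pq\right)=\left(\frac ps\right)$ enters only to guarantee that $p$ splits in $\QQ(\sqrt{qs})$; a short norm-residue-symbol computation at the two primes of $L_3$ above $p$ then shows that the generators $\varepsilon_2$ and $\varepsilon_{qs}$ of $E_{L_3}$ are local norms there. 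This bounds $e_d$ and yields $r_2\ge2$ directly---no second unramified extension is ever exhibited, and the dyadic analysis you anticipated as an obstacle is entirely avoided.
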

\begin{proof} 
	As we have  $E_{L_3}=\left\langle \varepsilon_{2},  \varepsilon_{ qs},
	\sqrt{\varepsilon_{ qs}\varepsilon_{2qs}}\right\rangle,$ then it is easy to check that $h_2(L_3)=1$.
	Therefore, the rank of the $2$-class group of $\LL$ is  $r_2(\mathbf{C}l(\LL^+))=t-1-e_d$, where ${e_d}$ is defined by $(E_{L_3}:E_{L_3}\cap N_{\LL^+/L_3}(\LL^+))=2^{ e_{d}}$ and $t=4$ is the number of ramified primes in $\LL^+/L_3$. Thus, the rank of the $2$-class group of $\LL^+$ is  $r_2(\mathbf{C}l(\LL^+))=3-e_{d}$.
	
	Let $\mathfrak p_{k}$ be a prime ideal of $k$ above $p$, where $k$ is a subfield of $L_3$. Notice that $p$ decomposes in $\mathbb{Q}(\sqrt{qs})$ and there are exactly $2$ prime ideal
	of $L_3$ laying above $p$. Using the well known properties of the norm residue symbols, we have:
	\begin{eqnarray*}
		\left(\frac{\varepsilon_2,\,ps}{ \mathfrak p_{L_3}}\right)=\left(\frac{\varepsilon_2,\,p}{ \mathfrak p_{L_3}}\right)&=&\left(\frac{N_{{L_3}/\mathbb Q(\sqrt{qs})}(\varepsilon_2),\,p}{ \mathfrak p_{\mathbb Q(\sqrt{qs})}}\right)\\
		&=&\left(\frac{-1,\,p}{ \mathfrak p_{\mathbb Q(\sqrt{qs})}}\right) =\left(\frac{-1,\,p}{ p}\right)  =1. 
	\end{eqnarray*}

	We similarly have $\left(\frac{\varepsilon_{qs},\,ps}{ \mathfrak p_{L_3}}\right)=\left(\frac{-1,\,ps}{ \mathfrak p_{L_3}}\right)=1$.
	It follows that $e_{d}\geq 1$ and so  $r_2(\mathbf{C}l(\LL^+))\geq 2$.
\end{proof}

Now we are able to state the first important result of this section.

\begin{theorem}   
		Let $p$, $q$ and $s$ be three different prime integers  satisfying (\ref{case1}). Let  $\ell\geq1$ be a positive odd  square-free integer relatively prime to $q$, $p$ and $s$. Put  $\LL^+=\mathbb{Q}(\sqrt{2},  \sqrt{pq}, \sqrt{ps})$ and $\LL=\mathbb{Q}(\sqrt{2},  \sqrt{pq}, \sqrt{ps}, \sqrt{-\ell})$. Then we have: 
	\begin{enumerate}[\rm 1.]
		\item	The unit group of $\LL^+$ is $$E_{\LL^+}=\left\langle-1, \varepsilon_{2},\varepsilon_{pq},  
		\sqrt{\varepsilon_{pq}\varepsilon_{2pq}}, \sqrt{\varepsilon_{ps}\varepsilon_{2ps}},  \sqrt{\varepsilon_{pq}\varepsilon_{ps}}, \sqrt{\varepsilon_{pq}\varepsilon_{qs}}, 
		\sqrt[4]{ \varepsilon_{pq}\varepsilon_{2pq}\varepsilon_{qs}\varepsilon_{2qs}} \right\rangle.$$
		Furthermore, the $2$-class group  of $\LL^+$ is isomorphic to $\ZZ/2\ZZ\times \ZZ/2\ZZ$.

		\item The unit group of $\LL$ is $$E_{\LL}=\left\langle   \eta, \varepsilon_{2},\varepsilon_{pq},  
		\sqrt{\varepsilon_{pq}\varepsilon_{2pq}}, \sqrt{\varepsilon_{ps}\varepsilon_{2ps}},  \sqrt{\varepsilon_{pq}\varepsilon_{ps}}, \sqrt{\varepsilon_{pq}\varepsilon_{qs}}, 
		\sqrt[4]{\varepsilon_{pq}\varepsilon_{2pq}\varepsilon_{qs}\varepsilon_{2qs}} \right\rangle,$$ where $\eta= \zeta_8	$ or $-1$ according to whether $\ell=1$ or not.
		
	\end{enumerate}
\end{theorem}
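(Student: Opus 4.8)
The plan is to obtain $E_{\LL^+}$ by Wada's method and then to descend to $E_{\LL}$ through Lemmas \ref{Lemme azizi} and \ref{Lemme azizi2}. First I would record the unit groups of the three quartic subfields of Figure \ref{fig:1}, taking $\sigma=\sigma_2$ and $\tau=\sigma_3$ so that the fixed fields of $\sigma$, $\tau$ and $\sigma\tau$ are $L_2$, $L_1$ and $L_3$. The group $E_{L_3}=\langle\varepsilon_2,\varepsilon_{qs},\sqrt{\varepsilon_{qs}\varepsilon_{2qs}}\rangle$ is already computed inside the proof of Lemma \ref{lemmarank}. For $L_1=\QQ(\sqrt2,\sqrt{pq})$ I would use that $\sqrt2\in L_1$: multiplying the two radicals $\sqrt{\varepsilon_{pq}}=b_1\sqrt p+b_2\sqrt q$ and $\sqrt{2\varepsilon_{2pq}}=y_1\sqrt p+y_2\sqrt{2q}$ of Lemma \ref{lm 6} gives $\sqrt{2\varepsilon_{pq}\varepsilon_{2pq}}\in L_1$, whence $\sqrt{\varepsilon_{pq}\varepsilon_{2pq}}\in L_1$ and $E_{L_1}=\langle\varepsilon_2,\varepsilon_{pq},\sqrt{\varepsilon_{pq}\varepsilon_{2pq}}\rangle$; the same computation with Lemma \ref{lm 4} gives $E_{L_2}=\langle\varepsilon_2,\varepsilon_{ps},\sqrt{\varepsilon_{ps}\varepsilon_{2ps}}\rangle$. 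By Wada's method \cite{Wa-66} recalled above, $E_{\LL^+}$ is then generated by $E_{L_1}E_{L_2}E_{L_3}$ together with the square roots of the elements of $E_{L_1}E_{L_2}E_{L_3}$ that are squares in $\LL^+$.

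The heart of the proof is to pin down exactly those square roots. Using $\sqrt{\varepsilon_{pq}}=b_1\sqrt p+b_2\sqrt q$, $\sqrt{\varepsilon_{ps}}=b_1'\sqrt p+b_2'\sqrt s$ and $\sqrt{\varepsilon_{qs}}=y_1'\sqrt q+y_2'\sqrt s$ from Lemmas \ref{lm 6}, \ref{lm 4} and \ref{lm 5}, the products $\sqrt{\varepsilon_{pq}}\,\sqrt{\varepsilon_{ps}}$ and $\sqrt{\varepsilon_{pq}}\,\sqrt{\varepsilon_{qs}}$ expand as $\QQ$-linear combinations of $1,\sqrt{pq},\sqrt{ps},\sqrt{qs}$, all of which lie in $\LL^+$; hence $\sqrt{\varepsilon_{pq}\varepsilon_{ps}},\sqrt{\varepsilon_{pq}\varepsilon_{qs}}\in\LL^+$. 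I expect the quartic radical $\sqrt[4]{\varepsilon_{pq}\varepsilon_{2pq}\varepsilon_{qs}\varepsilon_{2qs}}$ to be the main obstacle. One first checks $\sqrt{\varepsilon_{2pq}\varepsilon_{2qs}}\in\LL^+$ from $\sqrt{2\varepsilon_{2pq}}=y_1\sqrt p+y_2\sqrt{2q}$ and $\sqrt{2\varepsilon_{2qs}}=b_1'\sqrt{2q}+b_2'\sqrt s$, and then, writing $\sqrt{\varepsilon_{pq}\varepsilon_{2pq}}\,\sqrt{\varepsilon_{qs}\varepsilon_{2qs}}=\sqrt{\varepsilon_{pq}\varepsilon_{qs}}\,\sqrt{\varepsilon_{2pq}\varepsilon_{2qs}}$, one must show that this element of $\LL^+$ is a perfect square there. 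This is carried out by multiplying out the four radicals from Lemmas \ref{lm 5} and \ref{lm 6} and simplifying with the norm relations $1=pb_1^2-qb_2^2$, $2=py_1^2-2qy_2^2$, $1=-qy_1'^2+sy_2'^2$ and $2=-2qb_1'^2+sb_2'^2$, recognising the product as $2$ times the square of an explicit element of $\LL^+$. Finally one has to argue that no other product of the generators acquires a square root in $\LL^+$, so that $-1$ together with the seven displayed units is a complete system.

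With $E_{\LL^+}$ in hand I would compute the unit index $q(\LL^+)=[E_{\LL^+}:\prod_{i=1}^{7}E_{k_i}]$ relative to the seven quadratic subfields $k_i$. Writing the seven units $\varepsilon_2,\varepsilon_{pq},\varepsilon_{ps},\varepsilon_{qs},\varepsilon_{2pq},\varepsilon_{2ps},\varepsilon_{2qs}$ in terms of the seven free generators and evaluating the resulting $7\times7$ integer determinant gives $q(\LL^+)=2^{6}$. Lemma \ref{class numbers of quadratic field} gives $\prod_{i=1}^{7}h_2(k_i)=2^{5}$, so Lemma \ref{wada's f.} with $n=3$ and $v=9$ yields $h_2(\LL^+)=2^{-9}\cdot2^{6}\cdot2^{5}=4$. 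Since Lemma \ref{lemmarank} shows $r_2(\mathbf{C}l(\LL^+))\geq2$, a $2$-group of order $4$ and rank at least $2$ is forced to be $\ZZ/2\ZZ\times\ZZ/2\ZZ$; this proves part~1.

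For $\LL=\LL^+(\sqrt{-\ell})$ I would split into two cases. If $\ell\neq1$ then $i\notin\LL$, and Lemma \ref{Lemme azizi2} applies with $\beta=\ell$; here $\ell\epsilon$ is never a square in $\LL^+$ for a unit $\epsilon$, because for any prime $r\mid\ell$ (unramified in $\LL^+/\QQ$ and dividing $\ell$ exactly once) and any prime $\mathfrak r$ of $\LL^+$ above $r$ one has $v_{\mathfrak r}(\ell\epsilon)=1$, which is odd. Thus the second alternative of Lemma \ref{Lemme azizi2} holds: the free part of $E_{\LL}$ equals that of $E_{\LL^+}$ and the torsion is $\langle-1\rangle$. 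If $\ell=1$ then $\LL=\LL^+(i)$ contains $\zeta_8$ but not $\zeta_{16}$, so $n_0=3$ and $2+\mu_{n_0}=2+\sqrt2=\sqrt2\,\varepsilon_2$ in Lemma \ref{Lemme azizi}; the remaining point is that $(2+\sqrt2)\epsilon$ is a square in $\LL^+$ for no unit $\epsilon$, which I would establish by applying $\sigma_1$ and reducing to the assertion that $2\,\epsilon\,\sigma_1(\epsilon)$ is not a square in $\LL^+$, using the behaviour of $2$ in the field fixed by $\sigma_1$. Again the second alternative of Lemma \ref{Lemme azizi} holds, the free part is unchanged, and the torsion becomes $\langle\zeta_8\rangle$. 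In both cases $E_{\LL}$ has the stated generators with $\eta=\zeta_8$ or $-1$ according as $\ell=1$ or $\ell\neq1$.
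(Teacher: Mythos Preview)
Your overall architecture is right and in places cleaner than the paper (your valuation argument for $\ell\neq1$ is tidier than the paper's norm-map reduction), but there is one genuine divergence that matters and one gap.

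\textbf{The quartic root.} The paper does \emph{not} verify directly that $\sqrt[4]{\varepsilon_{pq}\varepsilon_{2pq}\varepsilon_{qs}\varepsilon_{2qs}}\in\LL^+$. Instead it first uses norm maps $N_{\LL^+/L_1}$ and $N_{\LL^+/L_4}$ to show that, modulo the already-known squares $\varepsilon_{pq}\varepsilon_{ps}$ and $\varepsilon_{pq}\varepsilon_{qs}$, the \emph{only} candidate for a further square in $E_{L_1}E_{L_2}E_{L_3}$ is $\sqrt{\varepsilon_{pq}\varepsilon_{2pq}}\cdot\sqrt{\varepsilon_{qs}\varepsilon_{2qs}}$. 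It then argues by contradiction: if this element were \emph{not} a square in $\LL^+$ one would get $q(\LL^+)=2^5$, hence $h_2(\LL^+)=2$ by Lemma~\ref{wada's f.} and Lemma~\ref{class numbers of quadratic field}, contradicting $r_2(\mathbf{C}l(\LL^+))\geq2$ from Lemma~\ref{lemmarank}. So in the paper the class-number formula and the rank bound are used to \emph{prove} the quartic root exists, whereas you propose to exhibit it explicitly and only afterwards compute $h_2(\LL^+)$. Your route is legitimate in principle, but the promised identification of $\sqrt{\varepsilon_{pq}\varepsilon_{qs}}\cdot\sqrt{\varepsilon_{2pq}\varepsilon_{2qs}}$ as twice a square in $\LL^+$ is a genuinely nontrivial four-factor computation that you only sketch; the paper's indirect argument bypasses it entirely.

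\textbf{Completeness of the generating set.} You write ``finally one has to argue that no other product of the generators acquires a square root in $\LL^+$'' but give no mechanism. This is exactly what the paper's norm-map tables accomplish (forcing $f=0$, $a=0$, $e=g$), and without them you have only $q(\LL^+)\geq2^6$, not equality, so you cannot yet conclude $h_2(\LL^+)=4$.

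\textbf{The case $\ell=1$.} Your reduction via $\sigma_1$ to ``$2\cdot N_{\LL^+/L_4}(\epsilon)$ is not a square in $L_4$'' is correct, but the last step is not as immediate as you suggest: the unique prime of $L_4$ above $2$ has $e=2$, so the $2$-adic valuation of $2$ in $L_4$ is even and a pure valuation argument fails. The paper instead runs the same norm-map machinery (Tables~\ref{Tab 3} and~\ref{Tab 4}) with $\xi^2=(2+\sqrt2)\cdot(\text{unit})$ and reduces all the way down to the visibly impossible requirement that $2$ or $2\varepsilon_{pq}$ be a square in $L_4$.
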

  \begin{proof} 
  	\begin{enumerate}[\rm 1.]
  			\item Let us compute the unit group of $\LL^+=\mathbb{Q}(\sqrt{2},  \sqrt{pq}, \sqrt{ps})$.

  		By  Lemma \ref{lm 4}, we have:
  	$$E_{L_2}=\left\langle \varepsilon_{2} ,    \varepsilon_{ ps},
  	\sqrt{\varepsilon_{ps}\varepsilon_{2ps}}\right\rangle.$$ 
  	
  	By Lemma \ref{lm 5}, we have:
  	
  	$$E_{L_3}=\left\langle \varepsilon_{2} ,    \varepsilon_{ qs},
  	\sqrt{\varepsilon_{ qs}\varepsilon_{2qs}}\right\rangle.$$ 
  	
  	By  Lemma \ref{lm 6}, we have:
  	$$E_{L_1}=\left\langle \varepsilon_{2} ,    \varepsilon_{ pq},
  	\sqrt{\varepsilon_{pq}\varepsilon_{2pq}}\right\rangle.$$

  	Thus
  	$$E_{L_1}E_{L_2}E_{L_3}=\left\langle-1, \varepsilon_{2} ,    \varepsilon_{ pq},  \varepsilon_{ps},     \varepsilon_{qs},	 
  	\sqrt{\varepsilon_{pq}\varepsilon_{2pq}},
  	\sqrt{\varepsilon_{ps}\varepsilon_{2ps}}, \sqrt{\varepsilon_{qs}\varepsilon_{2qs}} \right\rangle\cdot$$
  	
  	To find a fundamental system of units of $\LL^+$, it suffices, as we have said in the beginning of the section, to find elements $\xi$ of $E_{L_1}E_{L_2}E_{L_3}$ which are squares in $\LL^+$.
  	Put $$\xi^2= \varepsilon_2^a\cdot \varepsilon_{pq}^b \cdot \varepsilon_{ps}^c\cdot \varepsilon_{qs}^d\cdot \sqrt{\varepsilon_{pq}\varepsilon_{2pq}}^e \cdot  \sqrt{\varepsilon_{ps}\varepsilon_{2ps}}^f \cdot \sqrt{\varepsilon_{qs}\varepsilon_{2qs}}^g,
  	$$	
  	with $a,b,c,d,e,f,g\in\{0,1\}$.
  	We will use norm maps from $\LL^+$ to its biquadratic subextensions. The computations of these norms are summarized in the following table (see Table \ref{Tab 3}). Note that the third line of Table \ref{Tab 3}, is constructed as follows (we  similarly construct the rest of the table):

  	Furthermore, by Lemma \ref{lm 4}, we have:
  	
  	$$\left\{
  	\begin{array}{ll}
  		\sqrt{2 \varepsilon_{2ps}}=y_1\sqrt{p}+y_2\sqrt{2s} \quad \text{and} \quad 2=-py_1^2+2sy_2^2,\\
  		\sqrt{\varepsilon_{ps}}=b_1\sqrt{p} +b_2\sqrt{s} \quad \text{and} \quad	1= pb_1^2-sb_2^2.
  	\end{array}
  	\right.
  	$$
  	
  	Thus:
  	
  	$$\begin{array}{ll}
  		\sqrt{\varepsilon_{ps}\varepsilon_{2ps}}^{1+\sigma_1}&=	\sqrt{\varepsilon_{ps}\varepsilon_{2ps}}\sigma_1(\sqrt{\varepsilon_{ps}\varepsilon_{2ps}})\\
  		&=\varepsilon_{ps},\\
  		\vspace*{0.3cm}
  		\sqrt{\varepsilon_{ps}\varepsilon_{2ps}}^{1+\sigma_2}
  		&=\varepsilon_{ps}\varepsilon_{2ps},\\
  		\vspace*{0.3cm}
  		\sqrt{\varepsilon_{ps}\varepsilon_{2ps}}^{1+\sigma_3}
  		&=-1,\\
  		\vspace*{0.3cm}
  		\sqrt{\varepsilon_{ps}\varepsilon_{2ps}}^{1+\sigma_1\sigma_2}
  		&=\varepsilon_{ps},\\
  		\vspace*{0.3cm}
  		\sqrt{\varepsilon_{ps}\varepsilon_{2ps}}^{1+\sigma_1\sigma_3}
  		&=-\varepsilon_{2ps},\\
  		\vspace*{0.3cm}
  		\sqrt{\varepsilon_{ps}\varepsilon_{2ps}}^{1+\sigma_2\sigma_3}
  		&=-1.
  	\end{array}$$
  	
  	Using the same technique, we fill out the following table.

  	{\begin{table}[H]
  			\renewcommand{\arraystretch}{2.5}
  			
  			\begin{tabular}{|c|c|c|c|c|c|c|c|c|c}
  				\hline
  				$\varepsilon$ &$\varepsilon^{1+\sigma_1}$ & $\varepsilon^{1+\sigma_2}$ & $\varepsilon^{1+\sigma_3}$& $\varepsilon^{1+\sigma_1\sigma_2}$ &$\varepsilon^{1+\sigma_1\sigma_3}$ & $\varepsilon^{1+\sigma_2\sigma_3}$ \\ \hline
  				
  				$\varepsilon_{2}$ & $-1$ & $\varepsilon_{2}^2$  & $\varepsilon_{2}^2$ & $-1$ & $-1$ & $\varepsilon_{2}^2$  \\ \hline
  				
  				$\sqrt{\varepsilon_{pq}\varepsilon_{2pq}}$ & $-\varepsilon_{pq}$ & $1$ &$\varepsilon_{pq}\varepsilon_{2pq}$&$-\varepsilon_{2pq}$&$-\varepsilon_{pq}$&1 \\ \hline
  				$\sqrt{\varepsilon_{ps}\varepsilon_{2ps}}$ & $\varepsilon_{ps}$ & $\varepsilon_{ps}\varepsilon_{2ps}$ & $-1$ & $\varepsilon_{ps}$&$-\varepsilon_{2ps}$ & $-1$ \\ \hline
  				$\sqrt{\varepsilon_{qs}\varepsilon_{2qs}}$ & $-\varepsilon_{qs}$ & $1$ & $1$ & $-\varepsilon_{2qs}$&$-\varepsilon_{2qs}$ & $\varepsilon_{qs}\varepsilon_{2qs}$ \\ \hline
  			\end{tabular}
  			\caption{Norms in  $\LL^+/\QQ(\sqrt 2)$} \label{Tab 3}
  	\end{table} }

  Now we shall eliminate some forms of $\xi^2$ such that $\xi$ can not be in $\LL^+$.

  	\noindent\ding{229} Let us start by applying the norm   $N_{\LL^+/L_1}=1+\sigma_3$, where  $L_1=\mathbb{Q}(\sqrt{2}, \sqrt{pq})$. We have:
  	\begin{eqnarray*}
  		N_{\LL^+/L_1}(\xi^2)&=&  \varepsilon_{2}^{2a} \cdot\varepsilon_{pq}^{2b} \cdot 1\cdot 1\cdot (\varepsilon_{pq}\varepsilon_{2pq})^e \cdot (-1)^f \cdot 1.
  	\end{eqnarray*}
  	So $f=0$. Then $\xi^2=  \varepsilon_{2} \cdot \varepsilon_{pq}^b \cdot \varepsilon_{ps}^c\cdot \varepsilon_{qs}^d\cdot \sqrt{\varepsilon_{pq}\varepsilon_{2pq}}^e \cdot \sqrt{\varepsilon_{qs}\varepsilon_{2qs}}^g.$

  	\noindent\ding{229} Let us apply the norm  $N_{\LL^+/L_4}=1+\sigma_1$, where $L_4=\mathbb{Q}(\sqrt{pq}, \sqrt{ps})$. We have:
  	\begin{eqnarray*}
  		N_{\LL^+/L_4}(\xi^2)&=&  (-1)^{a}\cdot \varepsilon_{pq}^{2b} \cdot \varepsilon_{ps}^{2c}\cdot \varepsilon_{qs}^{2q}\cdot(-\varepsilon_{pq}^e) \cdot (-\varepsilon_{qs}^g)\\
  		&=&  (-1)^{a+e+g}\cdot \varepsilon_{pq}^{2b} \cdot \varepsilon_{ps}^{2c}\cdot \varepsilon_{qs}^{2q}\cdot \varepsilon_{pq}^e \cdot \varepsilon_{qs}^g.
  	\end{eqnarray*}
  	So $a+e+g=0 \pmod 2$ and $e=g$. Then $a=0$ and $\xi^2=  \varepsilon_{pq}^b \cdot \varepsilon_{ps}^c\cdot \varepsilon_{qs}^d\cdot \sqrt{\varepsilon_{pq}\varepsilon_{2pq}}^e \cdot \sqrt{\varepsilon_{qs}\varepsilon_{2qs}}^e.$ 
  	
  	Applying the other norms, we deduce non new information.

  	Notice that by Lemmas \ref{lm 4}, \ref{lm 5} and \ref{lm 6}, $\sqrt{\varepsilon_{pq}\varepsilon_{ps} }, \sqrt{\varepsilon_{pq}\varepsilon_{qs} },\sqrt{ \varepsilon_{ps}\varepsilon_{qs}}\in \LL^+$ and $\sqrt{\varepsilon_{pq}\varepsilon_{ps} }  \sqrt{\varepsilon_{pq}\varepsilon_{qs} }=\sqrt{ \varepsilon_{ps}\varepsilon_{qs}}$.
  	
  	We may assume that 
  	$\xi^2=\sqrt{\varepsilon_{pq} \varepsilon_{2pq}}^e \cdot \sqrt{\varepsilon_{qs} \varepsilon_{2qs}}^e.$

  	If $\sqrt{\varepsilon_{pq} \varepsilon_{2pq}}\cdot\sqrt{\varepsilon_{qs}\varepsilon_{2qs}}$ is not a square in $\LL^+$, then $q(\LL^+)=2^5$. On the other hand,  under conditions (\ref{case2}), we have  $ h_2(2)=h_2(qs)=1$ and $h_2(pq)=h_2(2pq) =h_2(ps)=h_2(2ps)=h_2(2qs)=2$ (cf. Lemma \ref{class numbers of quadratic field}). Therefore, by
  	Wada’s class number formula (cf. Lemma \ref{wada's f.}) we have: 
  	\begin{eqnarray*}
  		h_2(\LL^+)&=& \frac{1}{2^9}\cdot q(\LL^+) \cdot h_2(2) \cdot h_2(2ps) \cdot h_2(2qs) \cdot h_2(2pq)\cdot h_2(pq)\cdot h_2(ps)\cdot h_2(qs)\\
  		&=& \frac{1}{2^9}\cdot q(\LL^+) \cdot 1 \cdot 2 \cdot 2 \cdot 2\cdot 2\cdot 2\cdot 1= \frac{1}{2^4}q(\LL^+)=2,
  	\end{eqnarray*}
  	which is a contradiction, in fact, by Lemma \ref{lemmarank} we have $r_2(\mathbf{C}l(\LL^+))\geq 2$. Therefore,   $\sqrt{\varepsilon_{2qs}\varepsilon_{2qs}}\cdot\sqrt{\varepsilon_{pq} \varepsilon_{2pq}}$ is  a square in $\LL^+$, and so $q(\LL^+)=2^6$.  So the result.
  	
  	\item 	
  	
  		Let us now compute the unit group of $\LL=\mathbb{Q}(\sqrt{2},  \sqrt{pq}, \sqrt{ps}, \sqrt{-\ell}).$ 	To find a fundamental system of units of $\LL$, we shall use Lemmas \ref{Lemme azizi}, \ref{Lemme azizi2}. We distinguish the two following sub-cases:
  	\begin{enumerate}[\rm i.]
  	 \item   Assume that $\ell=1$, then we can use Lemma \ref{Lemme azizi} to construct a fundamental system of
  		units of $\LL$ from that of $\LL^+$. By the first item, we have: $$E_{\LL^+}=\left\langle-1, \varepsilon_{2},\varepsilon_{pq},  
  		\sqrt{\varepsilon_{pq}\varepsilon_{2pq}}, \sqrt{\varepsilon_{ps}\varepsilon_{2ps}},  \sqrt{\varepsilon_{pq}\varepsilon_{ps}}, \sqrt{\varepsilon_{pq}\varepsilon_{qs}}, 
  		\sqrt[4]{ \varepsilon_{pq}\varepsilon_{2pq}\varepsilon_{qs}\varepsilon_{2qs}} \right\rangle.$$
  	 So, let us 
  		put: \begin{equation}\label{eq1}
  			\xi^2= (2+\sqrt{2}) \cdot\varepsilon_2^a\cdot \varepsilon_{pq}^b \cdot \sqrt{ \varepsilon_{pq}\varepsilon_{2pq}}^c \cdot  \sqrt{ \varepsilon_{ps}\varepsilon_{2ps}}^d\cdot \sqrt{\varepsilon_{pq}\varepsilon_{ps}}^e \cdot \sqrt{\varepsilon_{pq}\varepsilon_{qs}}^f\cdot\sqrt[4]{ \varepsilon_{pq}\varepsilon_{2pq}\varepsilon_{qs}\varepsilon_{2qs}}^g,
  		\end{equation}
  		
  		with $a,b,c,d,e,f,g\in\{0,1\}$. Assume that $\xi\in \LL^+$.
  		We will use norm maps from $\LL^+$ to its biquadratic subextensions. The computations of these norms are summarized in    Tables \ref{Tab 3} and \ref{Tab 4} (see below).\\ 
  		By Lemmas \ref{lm 4}, \ref{lm 5} and \ref{lm 6}, we have: 
  		$$\left\{
  		\begin{array}{ll}
  			\sqrt{\varepsilon_{ps}}=b_1\sqrt{p}+b_2\sqrt{s}  \quad \text{and}   \quad		1= pb_1^2-sb_2^2,\\
  			\sqrt{ \varepsilon_{qs}}=a_1\sqrt{q}+a_2\sqrt{s}  \quad \text{and}   \quad	1=-qa_1^2+sa_2^2,\\
  				\sqrt{\varepsilon_{pq}}=y_1\sqrt{p} +y_2\sqrt{q} \quad \text{and}  \quad 1= py_1^2-qy_2^2.
  		\end{array}
  		\right.
  		$$

  		Thus:
  		
  		$$\begin{array}{ll}
  			\sqrt{\varepsilon_{pq}\varepsilon_{ps}}^{1+\sigma_1}&=	\sqrt{\varepsilon_{pq}\varepsilon_{ps}}\sigma_1(	\sqrt{\varepsilon_{pq}\varepsilon_{ps}})\\
  			&= \varepsilon_{pq}\varepsilon_{ps}.
  		\end{array}$$

  		We similarly have:
  		
  		{\begin{table}[H]
  				\renewcommand{\arraystretch}{2.5}
  				
  				{\footnotesize  \begin{tabular}{|c|c|c|c|c|c|c|c|c|c}
  						\hline
  						$\varepsilon$&$\varepsilon^{1+\sigma_1}$ & $\varepsilon^{1+\sigma_2}$ & $\varepsilon^{1+\sigma_3}$& $\varepsilon^{1+\sigma_1\sigma_2}$& $\varepsilon^{1+\sigma_1\sigma_3}$& $\varepsilon^{1+\sigma_2\sigma_3}$\\ \hline
  						
  						$\sqrt{\varepsilon_{pq}\varepsilon_{ps}}$&$\varepsilon_{pq}\varepsilon_{ps}$ &$\varepsilon_{ps}$&$\varepsilon_{pq}$& $\varepsilon_{ps}$&$\varepsilon_{pq}$&$1$\\ \hline
  						
  						$\sqrt{\varepsilon_{pq}\varepsilon_{qs}}$&$\varepsilon_{pq}\varepsilon_{qs}$ &$1$&$-\varepsilon_{pq}$& $1$&$-\varepsilon_{pq}$&$-\varepsilon_{qs}$\\ \hline
  						$\sqrt[4]{ \varepsilon_{pq}\varepsilon_{2pq}\varepsilon_{qs}\varepsilon_{2qs}}$&$(-1)^r\sqrt{\varepsilon_{pq}\varepsilon_{qs}}$ &$(-1)^s$&$(-1)^t\sqrt{\varepsilon_{pq}\varepsilon_{2pq}}$&$(-1)^u\sqrt{\varepsilon_{2pq}\varepsilon_{2qs}}$ &$(-1)^v\sqrt{\varepsilon_{pq}\varepsilon_{2qs}}$ &$(-1)^w\sqrt{\varepsilon_{qs}\varepsilon_{2qs}}$ \\ \hline
  						
  				\end{tabular}}
  				\caption{Norms in  $\LL^+/\QQ(\sqrt 2)$} \label{Tab 4}
  		\end{table}}
  		As above we shall apply norm maps to eliminate some forms of $\xi$. Using the above tables (\ref{Tab 3} and \ref{Tab 4}) one can easily check that.
  		
  		\noindent\ding{229} Let us start by applying the norm $N_{\LL^+/L_3}=1+\sigma_2\sigma_3$, where $L_3=\mathbb{Q}(\sqrt{2}, \sqrt{qs})$.
  		We have:
  		\begin{eqnarray*}
  			N_{\LL^+/L_3}(\xi^2)&=&  (2+\sqrt{2})^2 \cdot \varepsilon_{2}^{2a}\cdot 1\cdot 1 \cdot (-1)^d\cdot 1  \cdot(-\varepsilon_{qs})^f \cdot(-1)^{wg}\sqrt{\varepsilon_{qs}\varepsilon_{2qs}}^g\\
  			&=&  (-1)^{d+f+wg} \cdot(2+\sqrt{2})^2 \cdot \varepsilon_{2}^{2a}\cdot\varepsilon_{qs}^f\cdot\sqrt{\varepsilon_{qs}\varepsilon_{2qs}}^g.
  		\end{eqnarray*}
  		So $d+f+wg=0\pmod 2$.
  		By Lemma \ref{lm 5}, $E_{L_3}=\left\langle \varepsilon_{2} ,    \varepsilon_{ qs}, \sqrt{\varepsilon_{qs}\varepsilon_{2qs}}\right\rangle$. Since $\varepsilon_{qs}$, $\sqrt{\varepsilon_{qs}\varepsilon_{2ps}}$ and  $\varepsilon_{qs}\cdot\sqrt{\varepsilon_{qs}\varepsilon_{2qs}}$ are not squares in $L_3$, we have $f=g=0$ and $d=0$.

  		Therefore,
  		$\xi^2= (2+\sqrt{2}) \cdot\varepsilon_2^a\cdot \varepsilon_{pq}^b \cdot \sqrt{ \varepsilon_{pq}\varepsilon_{2pq}}^c \cdot  \sqrt{\varepsilon_{pq}\varepsilon_{ps}}^e.$

  		\noindent\ding{229} By applying the norm $N_{\LL^+/L_2}=1+\sigma_2$, where $L_2=\mathbb{Q}(\sqrt{2}, \sqrt{ps})$. We get:
  		\begin{eqnarray*}
  			N_{\LL^+/L_2}(\xi^2)&=& (2+\sqrt{2})^2 \cdot \varepsilon_{2}^{2a}\cdot 1 \cdot 1  \cdot(\varepsilon_{ps})^e.
  		\end{eqnarray*}
  		So $e=0$.
  		Therefore,
  		$\xi^2= (2+\sqrt{2}) \cdot\varepsilon_2^a\cdot \varepsilon_{pq}^b \cdot \sqrt{ \varepsilon_{pq}\varepsilon_{2pq}}^c.$
  		
  		\noindent\ding{229} Consider $L_4=\mathbb{Q}(\sqrt{pq}, \sqrt{ps})$, we will apply the norm $N_{\LL^+/L_4}=1+\sigma_1$. We have:
  		\begin{eqnarray*}
  			N_{\LL^+/L_4}(\xi^2)&=& 2 \cdot (-1)^a\cdot\varepsilon_{pq}^{2b} \cdot(-\varepsilon_{pq}^c)\\
  			&=&  (-1)^{a+c}\cdot\varepsilon_{pq}^{2b}\cdot 2 \cdot\varepsilon_{pq}^c.
  		\end{eqnarray*}
  		Since $2  \varepsilon_{pq}$  and $2$  are not squares in $L_4$, then the above equality is impossible. Therefore there is no element $\xi$ in $\LL^+$ satisfying the equality \eqref{eq1}. So the result by Lemma \ref{Lemme azizi}.
  		\item Assume that $\ell\not=1$. Now, we shall use Lemma \ref{Lemme azizi2}.
  		put: \begin{equation*}
  			\xi^2= \ell \cdot\varepsilon_2^a\cdot \varepsilon_{pq}^b \cdot \sqrt{ \varepsilon_{pq}\varepsilon_{2pq}}^c \cdot  \sqrt{ \varepsilon_{ps}\varepsilon_{2ps}}^d\cdot \sqrt{\varepsilon_{pq}\varepsilon_{ps}}^e \cdot \sqrt{\varepsilon_{pq}\varepsilon_{qs}}^f\cdot\sqrt[4]{ \varepsilon_{pq}\varepsilon_{2pq}\varepsilon_{qs}\varepsilon_{2qs}}^g,
  		\end{equation*}
  		Let us eliminate some cases. 
  		
  			\noindent\ding{229} Let us start by applying the norm $N_{\LL^+/L_3}=1+\sigma_2\sigma_3$, where $L_3=\mathbb{Q}(\sqrt{2}, \sqrt{qs})$.
  		We have:
  		\begin{eqnarray*}
  			N_{\LL^+/L_3}(\xi^2)&=&  \ell^2 \cdot \varepsilon_{2}^{2a}\cdot 1\cdot 1 \cdot (-1)^d\cdot 1  \cdot(-\varepsilon_{qs})^f \cdot(-1)^{wg}\sqrt{\varepsilon_{qs}\varepsilon_{2qs}}^g\\
  			&=&  (-1)^{d+f+wg} \cdot\ell^2 \cdot \varepsilon_{2}^{2a}\cdot\varepsilon_{qs}^f\cdot\sqrt{\varepsilon_{qs}\varepsilon_{2qs}}^g.
  		\end{eqnarray*}
  		So $d+f+wg=0\pmod 2$.
  		By Lemma \ref{lm 5}, $E_{L_3}=\left\langle \varepsilon_{2} ,    \varepsilon_{ qs}, \sqrt{\varepsilon_{qs}\varepsilon_{2qs}}\right\rangle$. Since $\varepsilon_{qs}$, $\sqrt{\varepsilon_{qs}\varepsilon_{2ps}}$ and  $\varepsilon_{qs}\cdot\sqrt{\varepsilon_{qs}\varepsilon_{2qs}}$ are not squares in $L_3$, we have $f=g=0$ and $d=0$.

  		Therefore,
  		$\xi^2=\ell \cdot\varepsilon_2^a\cdot \varepsilon_{pq}^b \cdot \sqrt{ \varepsilon_{pq}\varepsilon_{2pq}}^c \cdot  \sqrt{\varepsilon_{pq}\varepsilon_{ps}}^e.$

  		\noindent\ding{229} Let us start by applying the norm     $N_{\LL^+/L_2}=1+\sigma_2,$ where $L_2=\mathbb{Q}(\sqrt{2}, \sqrt{ps})$.
  		We have:
  		\begin{eqnarray*}
  			N_{\LL^+/L_2}(\xi^2)&=& \ell^2 \cdot \varepsilon_{2}^{2a}\cdot 1 \cdot 1  \cdot(\varepsilon_{ps})^e.
  		\end{eqnarray*}
  		So $e=0$.
  		Therefore,
  		$\xi^2= \ell\cdot\varepsilon_2^a\cdot \varepsilon_{pq}^b \cdot \sqrt{ \varepsilon_{pq}\varepsilon_{2pq}}^c.$
  		
  		\noindent\ding{229}  Let us apply the norm $N_{\LL^+/L_4}=1+\sigma_1$, where $L_4=\mathbb{Q}(\sqrt{pq}, \sqrt{ps})$. We get:
  		\begin{eqnarray*}
  			N_{\LL^+/L_4}(\xi^2)&=& \ell^2\cdot (-1)^a\cdot\varepsilon_{pq}^{2b} \cdot(-\varepsilon_{pq}^c)\\
  			&=& \ell^2\cdot (-1)^{a+c}\cdot\varepsilon_{pq}^{2b} \cdot\varepsilon_{pq}^c.
  		\end{eqnarray*}
  		So $a+c=0 \pmod 2$ and $c=0$, hence $a=c=0.$
  		Therefore,
  		$\xi^2= \ell\cdot \varepsilon_{pq}^b.$
  		
  		As $\sqrt{\varepsilon_{pq}}=b_1\sqrt{p} +b_2\sqrt{q}$ for some integer $b_1$ and $b_2$ (cf. Lemma \ref{lm 6})
  		$\ell \varepsilon_{pq}$ can not be a square in $\LL^+$. So the result by Lemma \ref{Lemme azizi2}.
  	\end{enumerate}	
  	\end{enumerate}
\end{proof} 
  
  \section{\textbf{The case: $p$, $q$ and $s$ satisfy conditions (\ref{case2})}}
  Let us start by collecting some results that will be useful in the sequel.
  
  \begin{lemma}\label{lm 1}
  	Let	$p\equiv 5\pmod 8$   and	$q\equiv  7\pmod 8$	 be two primes such that $\left(\frac{	p}{	q}\right)=-1$.
  	\begin{enumerate}[\rm 1.]
  		\item Let  $a$ and $b$   be two integers such that
  		$ \varepsilon_{pq}=a+b\sqrt{pq}$. Then
  		\begin{enumerate}[\rm a.]
  			\item $p(a-1)$ is a square in $\NN$,
  			\item 	  $\sqrt{2\varepsilon_{pq}}=b_1\sqrt{p} +b_2\sqrt{q}$ and 	$2= -pb_1^2+qb_2^2$,  for some integers $b_1$ and $b_2$.
  		\end{enumerate}

  		\item  There are two integers     $a$ and $b$   such that
  		$ \varepsilon_{2pq}=x+y\sqrt{2pq}$. Then
  		\begin{enumerate}[\rm a.]
  			\item $2p(x-1)$ is a square in $\NN$,
  			\item  $\sqrt{2 \varepsilon_{2pq}}=y_1\sqrt{2p}+y_2\sqrt{q}$ and $2=-2py_1^2+qy_2^2$,  for some integers $y_1$ and $y_2$.
  		\end{enumerate}
  	\end{enumerate}
  \end{lemma}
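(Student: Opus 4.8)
The plan is to follow verbatim the strategy of the proof of Lemma \ref{lm 6}, the only genuine change being that here $\left(\frac{p}{q}\right)=-1$ instead of $+1$, which will flip which factorization survives the elimination. First I would record the Legendre values forced by the hypotheses: since $p\equiv 5\pmod 8$ and $q\equiv 7\pmod 8$ one has $\left(\frac{2}{p}\right)=-1$, $\left(\frac{2}{q}\right)=1$, $\left(\frac{-1}{p}\right)=1$, $\left(\frac{-1}{q}\right)=-1$, hence $\left(\frac{-2}{p}\right)=\left(\frac{-2}{q}\right)=-1$; and since $p\equiv 1\pmod 4$, reciprocity gives $\left(\frac{q}{p}\right)=\left(\frac{p}{q}\right)=-1$. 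I would also note at the outset that $N(\varepsilon_{pq})=N(\varepsilon_{2pq})=1$, since a relation $x^2-dy^2=-1$ with $q\mid d$ would force $-1$ to be a square modulo $q$, contradicting $q\equiv 3\pmod 4$.

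For item~1, writing $\varepsilon_{pq}=a+b\sqrt{pq}$ with $a^2-pqb^2=1$, I would factor $(a-1)(a+1)=pqb^2$ and, exactly as in Lemma \ref{lm 6}, use unique factorization in $\ZZ$ together with Lemma \ref{lm} to reduce to the three systems $a\pm1=pb_1^2,\ a\mp1=qb_2^2$; $\ a\pm1=b_1^2,\ a\mp1=pqb_2^2$; and $\ a\pm1=2pb_1^2,\ a\mp1=2qb_2^2$ (the variants with a lone factor $2$ being excluded by Lemma \ref{lm}). I would kill the second system because a pure square on one side cannot be congruent to $\pm2$ modulo $p$, where $\left(\frac{\pm2}{p}\right)=-1$; and the third modulo $p$, where the $2q$-side forces $\left(\frac{2q}{p}\right)=\left(\frac{2}{p}\right)\left(\frac{q}{p}\right)=1$ to equal $\left(\frac{\pm2}{p}\right)=-1$, absurd. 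In the first system the sign choice $a+1=pb_1^2,\ a-1=qb_2^2$ is excluded modulo $q$ (it would give $1=\left(\frac{2}{q}\right)=\left(\frac{p}{q}\right)=-1$), leaving only $a-1=pb_1^2,\ a+1=qb_2^2$. This yields $p(a-1)=(pb_1)^2$, and adding/subtracting the two equations gives $2a=pb_1^2+qb_2^2$ and $2=-pb_1^2+qb_2^2$, whence $\left(b_1\sqrt{p}+b_2\sqrt{q}\right)^2=2a+2b_1b_2\sqrt{pq}=2\varepsilon_{pq}$, after checking $b=b_1b_2$ from $pqb^2=(a-1)(a+1)=pq(b_1b_2)^2$.

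For item~2, writing $\varepsilon_{2pq}=x+y\sqrt{2pq}$ with $x$ necessarily odd, I would factor $(x-1)(x+1)=2pqy^2$ and again reduce, via Lemma \ref{lm}, to the systems $x\pm1=b_1^2,\ x\mp1=2pqb_2^2$; $\ x\pm1=pb_1^2,\ x\mp1=2qb_2^2$; and $\ x\pm1=2pb_1^2,\ x\mp1=qb_2^2$. The first is eliminated modulo $p$ (a pure square cannot be congruent to $\pm2$), the second modulo $p$ using $\left(\frac{2q}{p}\right)=1\neq-1=\left(\frac{\pm2}{p}\right)$, and in the third the sign $x+1=2pb_1^2,\ x-1=qb_2^2$ is ruled out modulo $q$ by $\left(\frac{2}{q}\right)=1\neq-1=\left(\frac{2p}{q}\right)$. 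The surviving case $x-1=2pb_1^2,\ x+1=qb_2^2$ gives $2p(x-1)=(2pb_1)^2$, while adding/subtracting gives $2x=2pb_1^2+qb_2^2$ and $2=-2pb_1^2+qb_2^2$, so that $\left(b_1\sqrt{2p}+b_2\sqrt{q}\right)^2=2\varepsilon_{2pq}$, i.e. $\sqrt{2\varepsilon_{2pq}}=b_1\sqrt{2p}+b_2\sqrt{q}$ (renaming $b_i$ as $y_i$).

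The residue bookkeeping is routine; the main obstacle, and the place where care is essential, is the sign analysis. Because $\left(\frac{p}{q}\right)=-1$ rather than $+1$, the factorization that was discarded in Lemma \ref{lm 6} is now the one that survives and conversely, so every $\pm$ case must be recomputed from scratch rather than quoted. In particular one must verify that exactly one sign in exactly one system survives all the congruence obstructions, and that this surviving factorization is itself consistent (harbours no hidden contradiction), which is precisely what legitimizes the stated square-root identities.
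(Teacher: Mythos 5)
Your proposal is correct, but it takes a genuinely different route from the paper: for this lemma the paper gives no argument at all, its ``proof'' being the single line ``See \cite[Lemma 4.1]{chemszekhniniaziziUnits1}''. What you have done is reconstruct, from scratch, the kind of argument that reference (and the paper's own proof of Lemma \ref{lm 6}) uses: norm $+1$ forces the factorizations $(a-1)(a+1)=pqb^2$ and $(x-1)(x+1)=2pqy^2$, Lemma \ref{lm} prunes the decompositions with a lone factor $2$, and the residue symbols $\left(\frac{2}{p}\right)=\left(\frac{-2}{p}\right)=\left(\frac{-2}{q}\right)=\left(\frac{q}{p}\right)=-1$, $\left(\frac{2}{q}\right)=1$ eliminate all systems except $a-1=pb_1^2,\ a+1=qb_2^2$ in item~1 and $x-1=2py_1^2,\ x+1=qy_2^2$ in item~2, from which the square-root identities and the relations $2=-pb_1^2+qb_2^2$, $2=-2py_1^2+qy_2^2$ follow by subtraction, with $b=b_1b_2$ (resp.\ $y=y_1y_2$) recovered by comparing the two factorizations. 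I checked each of your congruence eliminations and the consistency of the two surviving systems; they are all correct, including the sign flips caused by $\left(\frac{p}{q}\right)=-1$, which indeed make the surviving factorization the ``opposite'' one to that of Lemma \ref{lm 6} (there $a+1=2pb_1^2$ survives, here $a-1=pb_1^2$ does). What your approach buys is self-containedness and a verification that the cited external statement is actually reproducible under exactly the stated hypotheses; what the paper's citation buys is brevity and consistency with the source from which the lemma is quoted verbatim. The only cosmetic slip is the labelling in your item~2 (you briefly call the integers $a,b$ before switching to $x,y$, mirroring a typo already present in the statement itself); it does not affect the argument.
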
	
  \begin{proof}
  	See \cite[Lemma 4.1]{chemszekhniniaziziUnits1}.
  \end{proof}
  \begin{lemma}\label{lm 2}
  	Let	$p\equiv -q\equiv5\pmod 8$ be two primes such that $\left(\frac{	p}{	q}\right)=-1$.
  	\begin{enumerate}[\rm 1.]
  		\item Let  $x$ and $y$   be two integers such that
  		$ \varepsilon_{2pq}=x+y\sqrt{2pq}$. Then
  		\begin{enumerate}[\rm a.]
  			\item $2p(x-1)$ is a square in $\NN$,
  			\item 	  $\sqrt{2\varepsilon_{2pq}}=y_1\sqrt{2p} +y_2\sqrt{q}$ and 	$2= -2py_1^2+qy_2^2$,  for some integers $y_1$ and $y_2$.
  		\end{enumerate}
  		\item  There are two integers     $a$ and $b$   such that
  		$ \varepsilon_{pq}=a+b\sqrt{pq}$. Then
  		\begin{enumerate}[\rm a.]
  			\item $p(a+1)$ is a square in $\NN$,
  			\item  $\sqrt{2\varepsilon_{pq}}=b_1\sqrt{p}+b_2\sqrt{q}$ and $2=pb_1^2-qb_2^2$,  for some integers $b_1$ and $b_2$.
  		\end{enumerate}
  	\end{enumerate}
  \end{lemma}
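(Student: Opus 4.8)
The plan is to treat both parts exactly as in the proof of Lemma~\ref{lm 6}, since the only structural difference lies in the quadratic-residue conditions. First I would record that both fundamental units have norm $+1$: the equality $N(\varepsilon_{2pq})=1$ is the standard fact used throughout, and $N(\varepsilon_{pq})=1$ holds because $-1$ cannot be a norm from $\QQ(\sqrt{pq})$ (reducing $x^2-pqy^2=-1$ modulo the prime $q\equiv 3\pmod 4$ would make $-1$ a square mod $q$). Writing $\varepsilon_{2pq}=x+y\sqrt{2pq}$ and $\varepsilon_{pq}=a+b\sqrt{pq}$, the relations $x^2-1=2pqy^2$ and $a^2-1=pqb^2$, combined with unique factorization in $\ZZ$ and Lemma~\ref{lm}, produce the three candidate systems for the pair $\{x-1,\,x+1\}$ (the distributions $\{1,2pq\}$, $\{p,2q\}$, $\{2p,q\}$ of the factors) and the three candidate systems for $\{a-1,\,a+1\}$ (the distributions $\{p,q\}$, $\{1,pq\}$, $\{2p,2q\}$).

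Next I would assemble the residue data forced by the hypotheses $p\equiv 5\pmod 8$, $q\equiv 3\pmod 8$ and $\left(\frac{p}{q}\right)=-1$: namely $\left(\frac{2}{p}\right)=-1$, $\left(\frac{-1}{p}\right)=1$, $\left(\frac{2}{q}\right)=-1$, $\left(\frac{-1}{q}\right)=-1$ (hence $\left(\frac{-2}{q}\right)=1$), and, by quadratic reciprocity with $p\equiv 1\pmod 4$, $\left(\frac{q}{p}\right)=\left(\frac{p}{q}\right)=-1$. Each unwanted system is then discarded by reducing one of its two equations modulo $p$ or modulo $q$, substituting the congruence $x\equiv\pm 1$ (resp. $a\equiv\pm 1$) coming from the companion equation, and comparing the Legendre symbol of the resulting $\pm 2$, $2p$, $2q$, $p$ or $q$ against the square on the other side. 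For instance, in part~1 the distribution $\{p,2q\}$ dies modulo $p$ because $\left(\frac{2q}{p}\right)=1$ while $\left(\frac{\mp 2}{p}\right)=-1$, and in part~2 the distribution $\{2p,2q\}$ dies the same way; the surviving subcase is then pinned down by one further comparison modulo $q$, using $\left(\frac{-2}{q}\right)=1\neq\left(\frac{p}{q}\right)$.

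The eliminations confirm that the only surviving systems are $x-1=2py_1^2,\ x+1=qy_2^2$ for part~1 and $a+1=pb_1^2,\ a-1=qb_2^2$ for part~2, whence $2p(x-1)=(2py_1)^2$ and $p(a+1)=(pb_1)^2$ are squares in $\NN$. To finish I would square the claimed radicals: $(y_1\sqrt{2p}+y_2\sqrt{q})^2=2py_1^2+qy_2^2+2y_1y_2\sqrt{2pq}=2x+2y\sqrt{2pq}=2\varepsilon_{2pq}$ with $y=y_1y_2$, and likewise $(b_1\sqrt{p}+b_2\sqrt{q})^2=2\varepsilon_{pq}$, while subtracting the two equations of each surviving system yields the Pell-type identities $2=-2py_1^2+qy_2^2$ and $2=pb_1^2-qb_2^2$. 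The main obstacle is purely bookkeeping: tracking the $\pm$ signs so that exactly the correct subcase survives. The delicate point is that, in contrast with the $q\equiv 7\pmod 8$ setting of Lemma~\ref{lm 6}, here the values $\left(\frac{2}{q}\right)$ and $\left(\frac{-2}{q}\right)$ are interchanged and $\left(\frac{p}{q}\right)=-1$, so several eliminations must be rerouted (e.g.\ carried out modulo $p$ rather than modulo $q$) in order to reach a conclusion of the same shape.
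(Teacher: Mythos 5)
Your argument is correct, but note that it does not---and could not---mirror the paper's own proof, because the paper gives none: the proof of Lemma~\ref{lm 2} there is the single line ``See \cite[Lemma 2.6]{ChemsUnits9}''. What you have done instead is transplant the method the paper uses to prove Lemma~\ref{lm 6}, and the transplant is sound. Your residue data are right (for $p\equiv 5\pmod 8$, $q\equiv 3\pmod 8$, $\left(\frac{p}{q}\right)=-1$ one has $\left(\frac{2}{p}\right)=\left(\frac{2}{q}\right)=\left(\frac{q}{p}\right)=-1$ and $\left(\frac{-1}{p}\right)=\left(\frac{-2}{q}\right)=1$); Lemma~\ref{lm} does cut the factorizations of $(x-1)(x+1)$ and $(a-1)(a+1)$ down to the three distributions you name in each part; and the eliminations run exactly as you say: $\{1,2pq\}$ and $\{1,pq\}$ die modulo $p$ because a square would have to represent $\left(\frac{\pm 2}{p}\right)=-1$, the distributions $\{p,2q\}$ and $\{2p,2q\}$ die modulo $p$ because $\left(\frac{2q}{p}\right)=1\neq\left(\frac{\mp 2}{p}\right)$, and in the surviving distribution the wrong sign choice dies modulo $q$, leaving $x-1=2py_1^2$, $x+1=qy_2^2$ and $a+1=pb_1^2$, $a-1=qb_2^2$, from which claims (a) and (b) of both parts follow by the squaring and subtraction you describe. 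Your route buys self-containedness (the reader need not consult \cite{ChemsUnits9}), at the cost of repeating a computation the paper chose to outsource. One cosmetic slip: your parenthetical ``using $\left(\frac{-2}{q}\right)=1\neq\left(\frac{p}{q}\right)$'' is the part-2 comparison; in part 1 the final elimination compares $\left(\frac{2p}{q}\right)=1$ against $\left(\frac{2}{q}\right)=-1$. Same mechanism, so this is a matter of phrasing, not a gap.
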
	
  \begin{proof}
  	See \cite[Lemma 2.6]{ChemsUnits9}.
  \end{proof}

  \begin{lemma}\label{lm 3}
  	Let	$q_{1}\equiv 7\pmod 8$   and	$q_{2}\equiv  3\pmod 8$	 be two primes such that $\left(\frac{	q_{2}}{	q_{1}}\right)=-1$.
  	\begin{enumerate}[\rm 1.]
  		\item Let  $x$ and $y$   be two integers such that
  		$ \varepsilon_{q_{1}	q_{2}}=x+y\sqrt{q_{1}	q_{2}}$. Then
  		\begin{enumerate}[\rm a.]
  			\item $2q_{1}(x+1)$ is a square in $\NN$,
  			\item 	  $\sqrt{\varepsilon_{q_{1}	q_{2}}}=y_1\sqrt{q_{1}} +y_2\sqrt{q_{2}}$ and 	$1= q_{1}y_1^2-q_{2}y_2^2$,  for some integers $y_1$ and $y_2$ such that $y=2y_1y_2$.
  		\end{enumerate}

  		\item  There are two integers     $a$ and $b$   such that
  		$ \varepsilon_{2pq}=a+b\sqrt{2pq}$. Then
  		\begin{enumerate}[\rm a.]
  			\item $2q_{1}(a+1)$ is a square in $\NN$,
  			\item  $\sqrt{2 \varepsilon_{2q_{1}	q_{2}}}=b_1\sqrt{2q_{1}}+b_2\sqrt{q_{2}}$ and $2=2q_{1}b_1^2-q_{2}b_2^2$,  for some integers $b_1$ and $b_2$ such that $b=b_1b_2$.
  		\end{enumerate}
  	\end{enumerate}
  \end{lemma}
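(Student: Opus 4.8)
The plan is to follow the same route as in the proof of Lemma~\ref{lm 6}, adapting the Legendre-symbol bookkeeping to the hypothesis $\left(\frac{q_2}{q_1}\right)=-1$. First I would record the symbol values that drive every elimination: since $q_1\equiv 7\pmod 8$ one has $\left(\frac{2}{q_1}\right)=1$ and $\left(\frac{-1}{q_1}\right)=-1$; since $q_2\equiv 3\pmod 8$ one has $\left(\frac{2}{q_2}\right)=-1$ and $\left(\frac{-1}{q_2}\right)=-1$; and since $q_1\equiv q_2\equiv 3\pmod 4$, quadratic reciprocity gives $\left(\frac{q_1}{q_2}\right)=-\left(\frac{q_2}{q_1}\right)=1$. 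Because both $q_1$ and $q_2$ are $\equiv 3\pmod 4$, the equation $X^2-q_1q_2Y^2=-1$ (resp. $X^2-2q_1q_2Y^2=-1$) is unsolvable, so $N(\varepsilon_{q_1q_2})=1$ (resp. $N(\varepsilon_{2q_1q_2})=1$), which is exactly what lets me apply Lemma~\ref{lm}.

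For part 1, writing $x^2-q_1q_2y^2=1$ as $(x-1)(x+1)=q_1q_2y^2$ and using unique factorization in $\ZZ$ together with Lemma~\ref{lm} (which forbids $2(x\pm1)$ and $2q_1q_2(x\pm1)$ from being squares), I would reduce to the three candidate systems $\{x\pm1=q_1y_1^2,\ x\mp1=q_2y_2^2\}$, $\{x\pm1=y_1^2,\ x\mp1=q_1q_2y_2^2\}$ and $\{x\pm1=2q_1y_1^2,\ x\mp1=2q_2y_2^2\}$. I then kill every option except $x+1=2q_1y_1^2$, $x-1=2q_2y_2^2$ by reducing the defining equalities modulo $q_1$ and modulo $q_2$: for instance $x+1=q_1y_1^2,\ x-1=q_2y_2^2$ forces $q_1y_1^2\equiv 2\pmod{q_2}$, i.e. $\left(\frac{q_1}{q_2}\right)=\left(\frac{2}{q_2}\right)=-1$, contradicting $\left(\frac{q_1}{q_2}\right)=1$; the remaining options fall the same way, using $\left(\frac{q_2}{q_1}\right)=-1$ or the supplementary laws. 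From the surviving system I read off $2q_1(x+1)=(2q_1y_1)^2$ (part 1a) and, after noting $(x-1)(x+1)=4q_1q_2y_1^2y_2^2$ gives $y=2y_1y_2$, that $(y_1\sqrt{q_1}+y_2\sqrt{q_2})^2=x+y\sqrt{q_1q_2}=\varepsilon_{q_1q_2}$ with $q_1y_1^2-q_2y_2^2=1$ (part 1b).

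Part 2 is identical in spirit with $d=2q_1q_2$: from $(a-1)(a+1)=2q_1q_2b^2$ and Lemma~\ref{lm} I reduce to $\{a\pm1=b_1^2,\ a\mp1=2q_1q_2b_2^2\}$, $\{a\pm1=2q_1b_1^2,\ a\mp1=q_2b_2^2\}$ and $\{a\pm1=2q_2b_1^2,\ a\mp1=q_1b_2^2\}$, and the same modular tests isolate $a+1=2q_1b_1^2$, $a-1=q_2b_2^2$, whence $2q_1(a+1)=(2q_1b_1)^2$ and $\sqrt{2\varepsilon_{2q_1q_2}}=b_1\sqrt{2q_1}+b_2\sqrt{q_2}$ with $2q_1b_1^2-q_2b_2^2=2$ and $b=b_1b_2$. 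The one delicate point, and the step I would be most careful about, is the elimination itself: several of the candidate systems are consistent modulo $q_1$ alone (the symbol $\left(\frac{q_2}{q_1}\right)=-1$ does not separate them), so the contradiction must be extracted at $q_2$ using the reciprocity value $\left(\frac{q_1}{q_2}\right)=1$ together with $\left(\frac{2}{q_2}\right)=-1$. Tracking which of the two sign assignments in each $\pm$-system survives, and matching it against the sign $+1$ demanded in the stated norm relations, is where an error would most easily creep in.
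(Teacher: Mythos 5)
Your proposal is correct, but it does not follow the paper's route, because the paper gives no argument for this lemma at all: its entire proof is a citation to \cite[Lemma 5]{chemszekhniniazizilambdas}. What you have written is a self-contained proof, and it is precisely the technique the paper itself deploys for the analogous Lemma \ref{lm 6}: norm $+1$ (forced here since $q_1\equiv q_2\equiv 3\pmod 4$ makes $X^2-dY^2=-1$ unsolvable for $d=q_1q_2$ and $2q_1q_2$), then the factorization $(x-1)(x+1)=dy^2$ with Lemma \ref{lm} pruning the splittings in which a factor is $2\cdot(\text{square})$ or $2d\cdot(\text{square})$, then Legendre-symbol eliminations of the surviving systems. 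I checked your bookkeeping: the symbol values $\left(\frac{2}{q_1}\right)=1$, $\left(\frac{-1}{q_1}\right)=\left(\frac{-1}{q_2}\right)=\left(\frac{2}{q_2}\right)=-1$, $\left(\frac{q_1}{q_2}\right)=-\left(\frac{q_2}{q_1}\right)=1$ are right; your three candidate systems in each part are the complete list after Lemma \ref{lm}; and the sign-by-sign eliminations do isolate $x+1=2q_1y_1^2$, $x-1=2q_2y_2^2$ in part 1 and $a+1=2q_1b_1^2$, $a-1=q_2b_2^2$ in part 2, which yield exactly the stated conclusions, including $y=2y_1y_2$, $b=b_1b_2$ and the relations $1=q_1y_1^2-q_2y_2^2$, $2=2q_1b_1^2-q_2b_2^2$. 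The subtlety you flag is genuine: for instance the system $x-1=2q_1y_1^2$, $x+1=2q_2y_2^2$ is consistent modulo $q_1$ and is only refuted modulo $q_2$, where $\left(\frac{q_1}{q_2}\right)=1$ collides with $\left(\frac{-1}{q_2}\right)=-1$. As for what each approach buys: the paper's citation keeps the exposition short and credits the source where the result was established, while your proof makes the lemma verifiable within the present paper and, as a side benefit, confirms the statement as printed (in particular it makes clear that ``$\varepsilon_{2pq}=a+b\sqrt{2pq}$'' in part 2 is a typo for $\varepsilon_{2q_1q_2}=a+b\sqrt{2q_1q_2}$).
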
	
  \begin{proof}
  	See \cite[Lemma 5]{chemszekhniniazizilambdas}.
  \end{proof}

  Now we are able to state the second important result of this section.
  \begin{theorem}  
  	Let  $p$, $q$ and $s$ be  three different prime integers  satisfying (\ref{case2}). Let  $\ell\geq1$ be a positive odd  square-free integer relatively prime to $q$, $p$ and $s$. Put $\LL^+=\mathbb{Q}(\sqrt{2},  \sqrt{pq}, \sqrt{ps})$ and $\LL=\mathbb{Q}(\sqrt{2},  \sqrt{pq}, \sqrt{ps}, \sqrt{-\ell}).$ Then we have: 
  	\begin{enumerate}[\rm 1.]
  		\item	$E_{\LL^+}=\left\langle -1, \varepsilon_{2}, \varepsilon_{pq}, \sqrt{\varepsilon_{pq}\varepsilon_{2pq}},  \sqrt{\varepsilon_{ps}\varepsilon_{2ps}}, \sqrt{\varepsilon_{qs}\varepsilon_{2qs}}, \sqrt{\varepsilon_{pq}\varepsilon_{ps}}, \sqrt{\varepsilon_{pq}\varepsilon_{qs}}\right\rangle\cdot$ 
  		\item
  		\begin{enumerate}[\rm i.] 
  			\item If $\ell=1.$  The group unit of $\LL$ is $\left\langle\zeta_8\right\rangle \times E_{\LL^+}$ or one of the following:
  			\begin{enumerate}[\rm $\bullet$]
  				
  				\item $\left\langle \zeta_8, \varepsilon_{2}, \varepsilon_{pq}, \sqrt{\varepsilon_{pq}\varepsilon_{2pq}}, \sqrt{\varepsilon_{ps}\varepsilon_{2ps}}, \sqrt{\varepsilon_{pq}\varepsilon_{ps}}, \sqrt{\varepsilon_{pq}\varepsilon_{qs}},  \sqrt{\zeta_8\varepsilon_{pq} \sqrt{ \varepsilon_{pq}\varepsilon_{2pq}}\sqrt{ \varepsilon_{qs}\varepsilon_{2qs}}}\right\rangle\cdot$
  				\item $\left\langle \zeta_8, \varepsilon_{2}, \varepsilon_{pq}, \sqrt{\varepsilon_{pq}\varepsilon_{2pq}}, \sqrt{\varepsilon_{ps}\varepsilon_{2ps}}, \sqrt{\varepsilon_{pq}\varepsilon_{ps}}, \sqrt{\varepsilon_{pq}\varepsilon_{qs}},  \sqrt{\zeta_8\sqrt{ \varepsilon_{pq}\varepsilon_{2pq}}\sqrt{ \varepsilon_{qs}\varepsilon_{2qs}}}\right\rangle,$
  				\item  $\left\langle \zeta_8, \varepsilon_{2}, \sqrt{\varepsilon_{pq}\varepsilon_{2pq}}, \sqrt{\varepsilon_{ps}\varepsilon_{2ps}}, \sqrt{\varepsilon_{pq}\varepsilon_{ps}}, \sqrt{\varepsilon_{pq}\varepsilon_{qs}},\sqrt{\zeta_8\sqrt{ \varepsilon_{pq}\varepsilon_{2pq}}\sqrt{ \varepsilon_{qs}\varepsilon_{2qs}}},  \sqrt{\zeta_8\varepsilon_{pq} \sqrt{ \varepsilon_{pq}\varepsilon_{2pq}}\sqrt{ \varepsilon_{qs}\varepsilon_{2qs}}}\right\rangle\cdot$
  				
  			\end{enumerate}
  			\item If $\ell\not=1.$
  			
  			$E_{\LL}=\left\langle  \eta, \varepsilon_{2}, \varepsilon_{pq}, \sqrt{\varepsilon_{pq}\varepsilon_{2pq}},  \sqrt{\varepsilon_{ps}\varepsilon_{2ps}}, \sqrt{\varepsilon_{qs}\varepsilon_{2qs}}, \sqrt{\varepsilon_{pq}\varepsilon_{ps}}, \sqrt{\varepsilon_{pq}\varepsilon_{qs}}\right\rangle\cdot$ 
  			
  		\end{enumerate}
  	\end{enumerate}
  \end{theorem}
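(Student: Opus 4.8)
The plan is to run the same machine used for (\ref{case1}), feeding it the arithmetic of (\ref{case2}).

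\textbf{Part 1 (the field $\LL^+$).} First I would read off the three biquadratic unit groups from the case-2 lemmas: Lemma \ref{lm 1} gives $E_{L_1}=\langle\varepsilon_2,\varepsilon_{pq},\sqrt{\varepsilon_{pq}\varepsilon_{2pq}}\rangle$, Lemma \ref{lm 2} gives $E_{L_2}=\langle\varepsilon_2,\varepsilon_{ps},\sqrt{\varepsilon_{ps}\varepsilon_{2ps}}\rangle$, and Lemma \ref{lm 3} (applied with $q_1=q\equiv7$, $q_2=s\equiv3\pmod8$ and $\left(\frac{s}{q}\right)=-1$) gives $E_{L_3}=\langle\varepsilon_2,\varepsilon_{qs},\sqrt{\varepsilon_{qs}\varepsilon_{2qs}}\rangle$. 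Forming $E_{L_1}E_{L_2}E_{L_3}$ and writing a general candidate square $\xi^2=\varepsilon_2^{a}\varepsilon_{pq}^{b}\varepsilon_{ps}^{c}\varepsilon_{qs}^{d}\sqrt{\varepsilon_{pq}\varepsilon_{2pq}}^{e}\sqrt{\varepsilon_{ps}\varepsilon_{2ps}}^{f}\sqrt{\varepsilon_{qs}\varepsilon_{2qs}}^{g}$ with exponents in $\{0,1\}$, I would rebuild the analogue of Table \ref{Tab 3} by evaluating each $\varepsilon^{1+\sigma}$ through the explicit radical identities $\sqrt{2\varepsilon_{pq}}=b_1\sqrt p+b_2\sqrt q$, $\sqrt{\varepsilon_{qs}}=y_1\sqrt q+y_2\sqrt s$, etc., supplied by Lemmas \ref{lm 1}--\ref{lm 3}. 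Applying the norms $N_{\LL^+/L_i}=1+\sigma$ forces $\mathbb{F}_2$-linear relations on $a,\dots,g$ and leaves, besides the square roots already present, only the genuinely new generators $\sqrt{\varepsilon_{pq}\varepsilon_{ps}}$ and $\sqrt{\varepsilon_{pq}\varepsilon_{qs}}$ (which lie in $\LL^+$ by the same radical identities).

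\textbf{Part 1, closing.} The decisive point, and where (\ref{case2}) parts ways with (\ref{case1}), is the product $\sqrt{\varepsilon_{pq}\varepsilon_{2pq}}\cdot\sqrt{\varepsilon_{qs}\varepsilon_{2qs}}$: under $\left(\frac{s}{q}\right)=-1$ the norm constraints show it is \emph{not} a square in $\LL^+$, so no fourth-root generator is created and both $\sqrt{\varepsilon_{pq}\varepsilon_{2pq}}$ and $\sqrt{\varepsilon_{qs}\varepsilon_{2qs}}$ survive. This gives the displayed system, with index $q(\LL^+)=[E_{\LL^+}:\prod_i E_{k_i}]=2^5$. I would certify this by Wada's formula (Lemma \ref{wada's f.}) together with the quadratic $2$-class numbers of Lemma \ref{class numbers of quadratic field}: the computation $\frac{1}{2^9}\cdot 2^5\cdot(1\cdot2\cdot2\cdot1\cdot2\cdot2\cdot2)=2$ yields $h_2(\LL^+)=2$, a consistency check pinning down the system.

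\textbf{Part 2, $\ell\neq1$.} Here I would apply Lemma \ref{Lemme azizi2} with $K_0=\LL^+$ and $\beta=\ell$. Writing $\xi^2=\ell\,\varepsilon_2^{a}\varepsilon_{pq}^{b}\sqrt{\varepsilon_{pq}\varepsilon_{2pq}}^{c}\cdots$ and applying $N_{\LL^+/L_3}$, $N_{\LL^+/L_2}$ and $N_{\LL^+/L_4}$ (with $L_4=\QQ(\sqrt{pq},\sqrt{ps})$) exactly as in the (\ref{case1}) proof, the exponent vector collapses to $\xi^2=\ell\,\varepsilon_{pq}^{b}$. Since $\sqrt{\varepsilon_{pq}}\notin\LL^+$ (the identity $\sqrt{2\varepsilon_{pq}}=b_1\sqrt p+b_2\sqrt q$ of Lemma \ref{lm 1} already lives only in $\QQ(\sqrt2,\sqrt p,\sqrt q)$), $\ell\,\varepsilon_{pq}$ is not a square, no admissible $\xi$ exists, and alternative 2 of Lemma \ref{Lemme azizi2} gives $E_{\LL}$ with $\eta=-1$.

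\textbf{Part 2, $\ell=1$ --- the main obstacle.} For $\LL=\LL^+(i)$ I would use Lemma \ref{Lemme azizi} with $n_0=3$, so $\mu_3=\sqrt2$ and one must decide whether $(2+\sqrt2)\epsilon$ is a square in $\LL^+$ for some unit $\epsilon$. Setting $\xi^2=(2+\sqrt2)\prod(\text{generators of }E_{\LL^+})$ and running the full battery of norm maps narrows the exponent vector just as before; but in contrast to (\ref{case1}), where every candidate was annihilated, under (\ref{case2}) the surviving norm conditions are compatible with several residual configurations of which $(2+\sqrt2)\epsilon$ are squares (the relevant candidates being built from $\varepsilon_{pq}$, $\sqrt{\varepsilon_{pq}\varepsilon_{2pq}}$ and $\sqrt{\varepsilon_{qs}\varepsilon_{2qs}}$, the last playing the role of the replaced generator $\epsilon_r$ in Lemma \ref{Lemme azizi}(1)). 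Each admissible configuration produces one of the four listed fundamental systems, the trivial one giving $\langle\zeta_8\rangle\times E_{\LL^+}$. This is precisely where the difficulty lies: the norm calculus is blind to which of these $(2+\sqrt2)\epsilon$ is an actual perfect square in $\LL^+$, and verifying which configurations are even internally consistent (for instance, ensuring one is not forced into $\sqrt{\varepsilon_{pq}}\in\LL$) would require an explicit square-root computation in $\LL^+$; absent that, the honest conclusion is the stated disjunction of possible unit groups.
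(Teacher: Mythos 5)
Your proposal follows the paper's proof essentially step by step: the same Lemmas \ref{lm 1}--\ref{lm 3} supply the quartic unit groups, the same norm-map eliminations (with the decisive norm to $L_4=\QQ(\sqrt{pq},\sqrt{ps})$ showing that $\sqrt{\varepsilon_{pq}}$, $\sqrt{\varepsilon_{qs}}$, $\sqrt{\varepsilon_{pq}\varepsilon_{qs}}$ do not lie in $L_4$, hence no fourth-root generator) give Part 1, and the two sub-cases of Part 2 are treated exactly as in the paper via Lemmas \ref{Lemme azizi} and \ref{Lemme azizi2}, ending with the same undecided disjunction when $\ell=1$. The only departure, your Wada-formula ``consistency check,'' is arithmetically fine but carries no certifying force (Lemma \ref{wada's f.} computes $h_2(\LL^+)$ \emph{from} the unit index, not conversely), so your argument, like the paper's, rests entirely on the norm computations.
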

  \begin{proof}

  	\begin{enumerate}[\rm 1.]
  		\item Let us compute the unit group of $\LL^+=\mathbb{Q}(\sqrt{2},  \sqrt{pq}, \sqrt{ps})$.
  		
  		By  Lemma \ref{lm 1}, we have:
  		$$E_{L_1}=\left\langle \varepsilon_{2} ,    \varepsilon_{ pq},
  		\sqrt{\varepsilon_{pq}\varepsilon_{2pq}}\right\rangle.$$
  		
  		By  Lemma \ref{lm 2}, we have:
  		$$E_{L_2}=\left\langle \varepsilon_{2} ,    \varepsilon_{ ps},
  		\sqrt{\varepsilon_{ps}\varepsilon_{2ps}}\right\rangle.$$
  		
  		By Lemma \ref{lm 3}, we have:
  		$$E_{L_3}=\left\langle \varepsilon_{2} ,    \varepsilon_{ qs},
  		\sqrt{\varepsilon_{ qs}\varepsilon_{2qs}}\right\rangle.$$

  		Thus
  		$$E_{L_1}E_{L_2}E_{L_3}=\left\langle-1, \varepsilon_{2} ,    \varepsilon_{ pq},  \varepsilon_{ps},     \varepsilon_{qs},	 
  		\sqrt{\varepsilon_{pq}\varepsilon_{2pq}},
  		\sqrt{\varepsilon_{ps}\varepsilon_{2ps}}, \sqrt{\varepsilon_{qs}\varepsilon_{2qs}} \right\rangle\cdot$$
  		
  		To find a fundamental system of units of $\LL^+$, it suffices, as we have said in the beginning of the section, to find elements $\xi$ of $E_{L_1}E_{L_2}E_{L_3}$ which are squares in $\LL^+$.
  		Put $$\xi^2= \varepsilon_2^a\cdot \varepsilon_{pq}^b \cdot \varepsilon_{ps}^c\cdot \varepsilon_{qs}^d\cdot \sqrt{\varepsilon_{pq}\varepsilon_{2pq}}^e \cdot  \sqrt{\varepsilon_{ps}\varepsilon_{2ps}}^f \cdot \sqrt{\varepsilon_{qs}\varepsilon_{2qs}}^g,
  		$$	
  		with $a,b,c,d,e,f,g\in\{0,1\}$.
  		We will use norm maps from $\LL^+$ to its biquadratic subextensions. The computations of these norms are summarized in the following table (see Table \ref{Tab 1}). Note that the third line of Table \ref{Tab 1}, is constructed as follows (we  similarly construct the rest of the table):\\

  		Furthermore, by Lemma \ref{lm 1}, we have:
  		
  		$$\left\{
  		\begin{array}{ll}
  			\sqrt{2 \varepsilon_{2pq}}=y_1\sqrt{2p}+y_2\sqrt{q} \quad \text{and} \quad 2=-2py_1^2+qy_2^2,\\
  			\sqrt{2\varepsilon_{pq}}=b_1\sqrt{p} +b_2\sqrt{q} \quad \text{and} \quad	2= -pb_1^2+qb_2^2.
  		\end{array}
  		\right.
  		$$
  		
  		Thus:
  		
  		$$\begin{array}{ll}
  			\sqrt{\varepsilon_{pq}\varepsilon_{2pq}}^{1+\sigma_1}&=	\sqrt{\varepsilon_{pq}\varepsilon_{2pq}}\sigma_1(\sqrt{\varepsilon_{pq}\varepsilon_{2pq}})\\
  			&=\varepsilon_{pq},\\
  			\vspace*{0.3cm}
  			\sqrt{\varepsilon_{pq}\varepsilon_{2pq}}^{1+\sigma_2}
  			&=1,\\
  			\vspace*{0.3cm}
  			\sqrt{\varepsilon_{pq}\varepsilon_{2pq}}^{1+\sigma_3}
  			&=\varepsilon_{pq}\varepsilon_{2pq},\\
  			\vspace*{0.3cm}
  			\sqrt{\varepsilon_{pq}\varepsilon_{2pq}}^{1+\sigma_1\sigma_2}
  			&=\varepsilon_{2pq},\\
  			\vspace*{0.3cm}
  			\sqrt{\varepsilon_{pq}\varepsilon_{2pq}}^{1+\sigma_1\sigma_3}
  			&=\varepsilon_{pq},\\
  			\vspace*{0.3cm}
  			\sqrt{\varepsilon_{pq}\varepsilon_{2pq}}^{1+\sigma_2\sigma_3}
  			&=1.
  		\end{array}$$
  		
  		Using the same technique, we fill out the following table.

  		{\begin{table}[H]
  				\renewcommand{\arraystretch}{2.5}
  				
  				\begin{tabular}{|c|c|c|c|c|c|c|c|c|c}
  					\hline
  					$\varepsilon$ &$\varepsilon^{1+\sigma_1}$ & $\varepsilon^{1+\sigma_2}$ & $\varepsilon^{1+\sigma_3}$& $\varepsilon^{1+\sigma_1\sigma_2}$ &$\varepsilon^{1+\sigma_1\sigma_3}$ & $\varepsilon^{1+\sigma_2\sigma_3}$ \\ \hline
  					
  					$\varepsilon_{2}$ & $-1$ & $\varepsilon_{2}^2$  & $\varepsilon_{2}^2$ & $-1$ & $-1$ & $\varepsilon_{2}^2$  \\ \hline
  					
  					$\sqrt{\varepsilon_{pq}\varepsilon_{2pq}}$ & $\varepsilon_{pq}$ & $1$ &$\varepsilon_{pq}\varepsilon_{2pq}$&$\varepsilon_{2pq}$&$\varepsilon_{pq}$&1 \\ \hline
  					$\sqrt{\varepsilon_{ps}\varepsilon_{2ps}}$ & $\varepsilon_{ps}$ & $\varepsilon_{ps}\varepsilon_{2ps}$ & $-1$ & $\varepsilon_{ps}$&$-\varepsilon_{2ps}$ & $-1$ \\ \hline
  					$\sqrt{\varepsilon_{qs}\varepsilon_{2qs}}$ & $\varepsilon_{qs}$ & $1$ & $1$ & $\varepsilon_{2qs}$&$\varepsilon_{2qs}$ & $\varepsilon_{qs}\varepsilon_{2qs}$ \\ \hline
  				\end{tabular}
  				\caption{Norms in  $\LL^+/\QQ(\sqrt 2)$} \label{Tab 1}
  		\end{table} }

  		Now we shall eliminate some forms of $\xi^2$ such that $\xi$ can not be in $\LL^+$.

  		\noindent\ding{229}  Let us start by applying the norm  $N_{\LL^+/L_1}=1+\sigma_3$, where $L_1=\mathbb{Q}(\sqrt{2}, \sqrt{pq})$. We have:
  		\begin{eqnarray*}
  			N_{\LL^+/L_1}(\xi^2)&=&  \varepsilon_{2}^{2a} \cdot\varepsilon_{pq}^{2b} \cdot 1\cdot 1\cdot (\varepsilon_{pq}\varepsilon_{2pq})^e \cdot (-1)^f \cdot 1.
  		\end{eqnarray*}
  		So $f=0$. Therefore, $\xi^2=  \varepsilon_{2} \cdot \varepsilon_{pq}^b \cdot \varepsilon_{ps}^c\cdot \varepsilon_{qs}^d\cdot \sqrt{\varepsilon_{pq}\varepsilon_{2pq}}^e \cdot \sqrt{\varepsilon_{qs}\varepsilon_{2qs}}^g.$

  		\noindent\ding{229} Consider    $L_4=\mathbb{Q}(\sqrt{pq}, \sqrt{ps})$,   we will  apply 	the norm     $N_{\LL^+/L_4}=1+\sigma_1$.
  		\begin{eqnarray*}
  			N_{\LL^+/L_4}(\xi^2)&=&  (-1)^{a}\cdot \varepsilon_{pq}^{2b} \cdot \varepsilon_{ps}^{2c}\cdot \varepsilon_{qs}^{2d}\cdot\varepsilon_{pq}^e \cdot \varepsilon_{qs}^g.
  		\end{eqnarray*}
  		So $a=0$. notice  $\sqrt{\varepsilon_{pq}}\notin L_4$, $\sqrt{\varepsilon_{qs}}\notin L_4$ and $\sqrt{\varepsilon_{pq}\varepsilon_{qs}}\notin L_4$, then $e=g=0$. Therefore, $\xi^2=  \varepsilon_{pq}^b \cdot \varepsilon_{ps}^c\cdot \varepsilon_{qs}^d$. 
  		Notice that by Lemmas \ref{lm 1}, \ref{lm 2} and \ref{lm 3}, $\sqrt{\varepsilon_{pq}\varepsilon_{ps} }, \sqrt{\varepsilon_{pq}\varepsilon_{qs} },\sqrt{ \varepsilon_{ps}\varepsilon_{qs}}\in \LL^+$ and $\sqrt{\varepsilon_{pq}\varepsilon_{ps} }  \sqrt{\varepsilon_{pq}\varepsilon_{qs} }=\sqrt{ \varepsilon_{ps}\varepsilon_{qs}}$.
  		So the result.

  		\item 	Let us now compute the unit group of $\LL=\mathbb{Q}(\sqrt{2},  \sqrt{pq}, \sqrt{ps}, \sqrt{-\ell}).$ 	To find a fundamental system of units of $\LL$, we shall use Lemmas \ref{Lemme azizi}, \ref{Lemme azizi2}. We distinguish the two following sub-cases:
  		\begin{enumerate}[\rm i.]
  			\item   Assume that $\ell=1$, then we can use Lemma \ref{Lemme azizi} to construct a fundamental system of
  			units of $\LL$ from that of $\LL^+$. By the first item, we have: 
  			
  			$E_{\LL^+}=\left\langle -1, \varepsilon_{2}, \varepsilon_{pq}, \sqrt{\varepsilon_{pq}\varepsilon_{2pq}},  \sqrt{\varepsilon_{ps}\varepsilon_{2ps}}, \sqrt{\varepsilon_{qs}\varepsilon_{2qs}}, \sqrt{\varepsilon_{pq}\varepsilon_{ps}}, \sqrt{\varepsilon_{pq}\varepsilon_{qs}}\right\rangle\cdot$ 
  			So, let us 
  			put: \begin{equation*}
  				\xi^2= (2+\sqrt{2}) \cdot\varepsilon_2^a\cdot \varepsilon_{pq}^b \cdot \sqrt{ \varepsilon_{pq}\varepsilon_{2pq}}^c \cdot  \sqrt{ \varepsilon_{ps}\varepsilon_{2ps}}^d\cdot  \sqrt{ \varepsilon_{qs}\varepsilon_{2qs}}^e\cdot\sqrt{\varepsilon_{pq}\varepsilon_{ps}}^f \cdot \sqrt{\varepsilon_{pq}\varepsilon_{qs}}^g,
  			\end{equation*}
  			
  			with $a,b,c,d,e,f,g\in\{0,1\}$. Assume that $\xi\in \LL^+$.
  			We will use norm maps from $\LL^+$ to its biquadratic subextensions. The computations of these norms are summarized in    Tables \ref{Tab 1} and \ref{Tab 2} (see below).\\ 
  			By Lemmas \ref{lm 1}, \ref{lm 2} and \ref{lm 3}, we have: 
  			$$\left\{
  			\begin{array}{ll}
  				\sqrt{2\varepsilon_{pq}}=y_1\sqrt{p} +y_2\sqrt{q} \quad \text{and}  \quad 2= -py_1^2+qy_2^2,\\
  				\sqrt{2 \varepsilon_{ps}}=b_1\sqrt{p}+b_2\sqrt{s}  \quad \text{and}   \quad		2= pb_1^2-sb_2^2,\\
  				\sqrt{ \varepsilon_{qs}}=a_1\sqrt{q}+a_2\sqrt{s}  \quad \text{and}   \quad	1=qa_1^2-sa_2^2.
  			\end{array}
  			\right.
  			$$

  			Thus:
  			
  			$$\begin{array}{ll}
  				\sqrt{\varepsilon_{pq}\varepsilon_{ps}}^{1+\sigma_1}&=	\sqrt{\varepsilon_{pq}\varepsilon_{ps}}\sigma_1(	\sqrt{\varepsilon_{pq}\varepsilon_{ps}})\\
  				&= \varepsilon_{pq}\varepsilon_{ps}.
  			\end{array}$$

  			We similarly have:
  			
  			{\begin{table}[H]
  					\renewcommand{\arraystretch}{2.5}
  					
  					{\footnotesize  \begin{tabular}{|c|c|c|c|c|c|c|c|c|c}
  							\hline
  							$\varepsilon$&$\varepsilon^{1+\sigma_1}$ & $\varepsilon^{1+\sigma_2}$ & $\varepsilon^{1+\sigma_3}$& $\varepsilon^{1+\sigma_1\sigma_2}$& $\varepsilon^{1+\sigma_1\sigma_3}$& $\varepsilon^{1+\sigma_2\sigma_3}$\\ \hline
  							
  							$\sqrt{\varepsilon_{pq}\varepsilon_{ps}}$&$\varepsilon_{pq}\varepsilon_{ps}$ &$-\varepsilon_{ps}$&$\varepsilon_{pq}$&$-\varepsilon_{ps}$&$\varepsilon_{pq}$& $-1$\\ \hline
  							
  							$\sqrt{\varepsilon_{pq}\varepsilon_{qs}}$&$-\varepsilon_{pq}\varepsilon_{qs}$ &$1$&$\varepsilon_{pq}$& $-1$&$-\varepsilon_{pq}$&$\varepsilon_{qs}$\\ \hline

  					\end{tabular}}
  					\caption{Norms in  $\LL^+/\QQ(\sqrt 2)$} \label{Tab 2}
  			\end{table}}
  			As above we shall apply norm maps to eliminate some forms of $\xi$. Using the above tables (\ref{Tab 1} and \ref{Tab 2}).
  			
  			\noindent\ding{229}  Let us start by applying the norm $N_{\LL^+/L_1}=1+\sigma_3$, where  $L_1=\mathbb{Q}(\sqrt{2}, \sqrt{pq}).$
  			We have:
  			\begin{eqnarray*}
  				N_{\LL^+/L_1}(\xi^2)&=&  (2+\sqrt{2})^2 \cdot \varepsilon_{2}^{2a}\cdot \varepsilon_{pq}^{2b} \cdot (\varepsilon_{pq}\varepsilon_{2pq})^c  \cdot(-1)^d\cdot1 \cdot\varepsilon_{pq}^f\cdot\varepsilon_{pq}^g.\\
  			\end{eqnarray*}
  			So $d=0$ and $f=g$.
  			Therefore,
  			$\xi^2= (2+\sqrt{2}) \cdot\varepsilon_2^a\cdot \varepsilon_{pq}^b \cdot \sqrt{ \varepsilon_{pq}\varepsilon_{2pq}}^c \cdot \sqrt{ \varepsilon_{qs}\varepsilon_{2qs}}^e\cdot\sqrt{\varepsilon_{pq}\varepsilon_{ps}}^f \cdot \sqrt{\varepsilon_{pq}\varepsilon_{qs}}^f.$

  			\noindent\ding{229}  Let us apply the norm $N_{\LL^+/L_2}=1+\sigma_2$, where $L_2=\mathbb{Q}(\sqrt{2}, \sqrt{ps})$. We have:
  			\begin{eqnarray*}
  				N_{\LL^+/L_2}(\xi^2)&=& (2+\sqrt{2})^2 \cdot \varepsilon_{2}^{2a}\cdot 1 \cdot 1  \cdot1\cdot(-\varepsilon_{ps})^f\cdot1.
  			\end{eqnarray*}
  			So $f=g=0$.
  			Therefore,
  			$\xi^2= (2+\sqrt{2}) \cdot\varepsilon_2^a\cdot \varepsilon_{pq}^b \cdot \sqrt{ \varepsilon_{pq}\varepsilon_{2pq}}^c \cdot \sqrt{ \varepsilon_{qs}\varepsilon_{2qs}}^e.$
  			
  			\noindent\ding{229} Consider $L_4=\mathbb{Q}(\sqrt{pq}, \sqrt{ps})$, we will apply the norm $N_{\LL^+/L_4}=1+\sigma_1$. We have:
  			\begin{eqnarray*}
  				N_{\LL^+/L_4}(\xi^2)&=& 2 \cdot (-1)^a\cdot\varepsilon_{pq}^{2b} \cdot\varepsilon_{pq}^c\cdot\varepsilon_{qs}^e.
  			\end{eqnarray*}
  			So $a=0$ and $c=e=1$.
  			Therefore,
  			$\xi^2= (2+\sqrt{2})\cdot \varepsilon_{pq}^b \cdot \sqrt{ \varepsilon_{pq}\varepsilon_{2pq}} \cdot \sqrt{ \varepsilon_{qs}\varepsilon_{2qs}}.$ Applying the other norms, we deduce non new information.
  			So the result by Lemma \ref{Lemme azizi}.
  			\item Assume that $\ell\not=1$. Now, we shall use Lemma \ref{Lemme azizi2}.
  			put: \begin{equation*}
  				\xi^2= \ell \cdot\varepsilon_2^a\cdot \varepsilon_{pq}^b \cdot \sqrt{ \varepsilon_{pq}\varepsilon_{2pq}}^c \cdot  \sqrt{ \varepsilon_{ps}\varepsilon_{2ps}}^d\cdot  \sqrt{ \varepsilon_{qs}\varepsilon_{2qs}}^e\cdot\sqrt{\varepsilon_{pq}\varepsilon_{ps}}^f \cdot \sqrt{\varepsilon_{pq}\varepsilon_{qs}}^g,
  			\end{equation*}
  			Let us eliminate some cases. 
  			
  			\noindent\ding{229} Let us start by applying the norm     $N_{\LL^+/L_1}=1+\sigma_3,$ where $L_1=\mathbb{Q}(\sqrt{2}, \sqrt{pq})$.
  			We have:
  			\begin{eqnarray*}
  				N_{\LL^+/L_1}(\xi^2)&=&  \ell^2 \cdot \varepsilon_{2}^{2a}\cdot \varepsilon_{pq}^{2b} \cdot (\varepsilon_{pq}\varepsilon_{2pq})^c  \cdot(-1)^d\cdot1 \cdot\varepsilon_{pq}^f\cdot\varepsilon_{pq}^g.\\
  			\end{eqnarray*}
  			So $d=0$ and $f=g$.
  			Therefore,
  			$\xi^2= \ell \cdot\varepsilon_2^a\cdot \varepsilon_{pq}^b \cdot \sqrt{ \varepsilon_{pq}\varepsilon_{2pq}}^c \cdot \sqrt{ \varepsilon_{qs}\varepsilon_{2qs}}^e\cdot\sqrt{\varepsilon_{pq}\varepsilon_{ps}}^f \cdot \sqrt{\varepsilon_{pq}\varepsilon_{qs}}^f.$

  			\noindent\ding{229} Consider $L_2=\mathbb{Q}(\sqrt{2}, \sqrt{ps})$, we will apply the norm $N_{\LL^+/L_2}=1+\sigma_2$. We have:
  			\begin{eqnarray*}
  				N_{\LL^+/L_2}(\xi^2)&=& \ell^2 \cdot \varepsilon_{2}^{2a}\cdot 1 \cdot 1  \cdot1\cdot(-\varepsilon_{ps})^f\cdot1.
  			\end{eqnarray*}
  			So $f=g=0$.
  			Therefore,
  			$\xi^2= \ell \cdot\varepsilon_2^a\cdot \varepsilon_{pq}^b \cdot \sqrt{ \varepsilon_{pq}\varepsilon_{2pq}}^c \cdot \sqrt{ \varepsilon_{qs}\varepsilon_{2qs}}^e.$
  			
  			\noindent\ding{229} Let us apply the norm $N_{\LL^+/L_4}=1+\sigma_1$, where $L_4=\mathbb{Q}(\sqrt{pq}, \sqrt{ps})$. We have:
  			\begin{eqnarray*}
  				N_{\LL^+/L_4}(\xi^2)&=& \ell^2 \cdot (-1)^a\cdot\varepsilon_{pq}^{2b} \cdot\varepsilon_{pq}^c\cdot\varepsilon_{qs}^e.
  			\end{eqnarray*}
  			So $a=0$. notice  $\sqrt{\varepsilon_{pq}}\notin L_4$, $\sqrt{\varepsilon_{qs}}\notin L_4$ and $\sqrt{\varepsilon_{pq}\varepsilon_{qs}}\notin L_4$, then $e=g=0$.
  			Therefore,
  			$\xi^2= \ell\cdot \varepsilon_{pq}^b.$

  			As $\sqrt{2\varepsilon_{pq}}=b_1\sqrt{p} +b_2\sqrt{q}$ for some integer $b_1$ and $b_2$ (cf. Lemma \ref{lm 1})
  			$\ell \varepsilon_{pq}$ can not be a square in $\LL^+$. So the result by Lemma \ref{Lemme azizi2}.
  		\end{enumerate}	
  	\end{enumerate}	
  \end{proof}


\begin{thebibliography}{11}
 	\bibitem{azizunints99}	 A. Azizi,   {Unités de certains corps de nombres imaginaires et abéliens sur $\QQ$,}{ Ann. Sci. Math. Québec, 23 (1999), 15-21.}
 	\bibitem{azizi2000} A. Azizi,  {Sur la capitulation des $2$-classes d'id\'eaux de	$\kk =\QQ(\sqrt{2pq}, i)$,  o\`u $p\equiv-q\equiv1\pmod4$, }{  Acta. Arith.  { 94} (2000),  383-399.}
 \bibitem{acz}	A. Azizi, M. M. Chems-Eddin and A. Zekhnini, Note on the Hilbert 2-class field tower, Math. Bohem.,  147 (2022),   513-524. 
 	
  
 \bibitem{ChemsUnits9}	M. M. Chems-Eddin,  {Unit groups of some multiquadratic number fields and  $2$-class groups,} Period. Math. Hung. 84,   235-249  (2022).  

 \bibitem{chemszekhniniazizilambdas}  M. M. Chems-Eddin,      A. Azizi and     A. Zekhnini,   {Unit groups and  Iwasawa Lambda invariants     of some multiquadratic number fields}, Bol. Soc. Mat. Mex. 27, 24 (2021). {https://doi.org/10.1007/s40590-021-00329-z}
 
 \bibitem{chemszekhniniaziziUnits1}  M. M. Chems-Eddin,     A. Zekhnini and     A.  Azizi,   {Units and  $2$-class field towers  of some multiquadratic number fields}{,  Turk. J. Math., 44 (2020), 1466-1483.}
 
 \bibitem{CEH} M. M. Chems-Eddin,    M. B. T. El Hamam and    M. A.  Hajjami, On the unit group and the $2$-class number of $\mathbb{Q}(\sqrt{2},\sqrt{p},\sqrt{q})$, Ramanujan J.,  (2024). DOI: 10.1007/s11139-024-00947-x
 
 
 \bibitem{connor88}	P. E. Conner and J. Hurrelbrink,  {Class number parity, }{ Ser. Pure. Math. $8$,  World Scientific,  1988. }



\bibitem{El Hamam} M. B. T. El Hamam,   {The unit group of some fields of the form   $ \mathbb{Q}(\sqrt{2},  \sqrt{p}, \sqrt{q}, \sqrt{-\ell})$},  Math. Bohem.,   149 (2024),    49-55.

\bibitem{El Hamam1} M. B. T. El Hamam, {The unit group of some fields of the form $\mathbb{Q}(\sqrt{2},  \sqrt{pq}, \sqrt{ps})$ and  $\mathbb{Q}(\sqrt{2},  \sqrt{pq}, \sqrt{ps}, \sqrt{-l})$}. Period Math Hung (2025) (to appear).

	\bibitem{Ku-50} S. Kuroda,  {\"Uber die Klassenzahlen algebraischer Zahlk\"orper,}{ Nagoya Math. J., 1 (1950),  1--10.}

\bibitem{kaplan76} P.~Kaplan,  {Sur le $2$-groupe des classes d'id\'eaux des corps quadratiques, }{ J. Reine. Angew. Math.,  283/284 (1976),  313-363.}
 
 \bibitem{Ku-56} T. Kubota, {\" Uber den bizyklischen biquadratischen Zahlk\"orper.}{  Nagoya. Math. J., 10: 65-85  (1956) (in German).}
 
 \bibitem{Wa-66} H. Wada,   {On the class number and the unit group of certain algebraic number fields.}{ J. Fac. Univ. Tokyo.   13  (1966), 201-209.}
 \end{thebibliography}
 \end{document}